\theoremstyle{plain}
\newtheorem{thm}{Theorem}
\newtheorem{cor}[thm]{Corollary}
\newtheorem{lemma}[thm]{Lemma}
\newtheorem{question}[thm]{Question}
\theoremstyle{remark}
\newtheorem*{rmk}{\textbf{Remark}}
\numberwithin{equation}{section}
\newcommand{\Av}{\mathcal A}
\newcommand{\B}{\mathcal{B}}
\newcommand{\C}{\mathbb C}
\newcommand{\dist}{\mathrm{dist}}
\newcommand{\Lip}{\mathrm{Lip}}
\newcommand\M{\mathbb M}
\newcommand\Me{\mathcal M}
\newcommand\N{\mathbb N}
\newcommand{\Or}{\mathbb{O}}
\newcommand\R{\mathbb R}
\newcommand{\s}{\mathbb S}
\newcommand\T{\mathbb T}
\newcommand{\Un}{\mathbb{U}}
\newcommand{\Vol}{\mathrm{Vol}}
\title[Small scale equidistribution of random eigenbases]{Small scale equidistribution of random eigenbases}
\author{Xiaolong Han}
\address{Department of Mathematics, The Australian National University, Canberra, ACT 2601, Australia}
\email{Xiaolong.Han@anu.edu.au}
\subjclass[2010]{58G25, 58J65, 35P20, 60B10}
\keywords{Eigenfunction equidistribution at small scales, random eigenbases, Levy concentration of measures, covering lemma}
\thanks{Research is partially supported by the Australian Research Council through Discovery Projects DP120102019 and DP150102419.}
\date{}
\begin{document}
\maketitle

\begin{abstract}
We investigate small scale equidistribution of random orthonormal bases of eigenfunctions (i.e. eigenbases) on a compact manifold $\M$. Assume that the group of isometries acts transitively on $\M$ and the multiplicity $m_\lambda$ of eigenfrequency $\lambda$ tends to infinity at least logarithmically as $\lambda\to\infty$. We prove that, with respect to the natural probability measure on the space of eigenbases, almost surely a random eigenbasis is equidistributed at small scales; furthermore, the scales depend on the growth rate of $m_\lambda$. In particular, this implies that almost surely random eigenbases on the sphere $\s^n$ ($n\ge2$) and the tori $\T^n$ ($n\ge5$) are equidistributed at polynomial scales. 
\end{abstract}

\section{Introduction}
Let $(\M,g)$ be a compact and smooth Riemannian manifold of dimension $n$ without boundary. Denote $\Delta=\Delta_g$ the (positive) Laplace-Beltrami operator and $\{e_j\}_{j=0}^\infty$ an orthonormal basis of eigenfunctions (i.e. eigenbasis) of $\Delta$ with eigenvalues $\lambda_j^2$ (counting multiplicities), i.e. $\Delta e_j=\lambda^2_je_j$, where $\lambda_j$ is called the eigenfrequency. Without loss of generality, we assume that the injectivity radius of $\M$ is greater than $1$ throughout the paper.

When the geodesic flow $G_t$ on the cosphere bundle $S^*\M$ of $\M$ is ergodic with respect to the (normalized) Liouville measure $\mu_L$, quantum ergodicity characterizes the asymptotic behaviour of the eigenbases. In particular, the quantum ergodic theorem of \v Snirel'man-Zelditch-Colin de Verdi\`ere \cite{Sn, Ze1, CdV} states that
\begin{thm}[Quantum ergodicity]\label{thm:QE} 
Assume that $G_t$ is ergodic on $S^*\M$ with respect to $\mu_L$. Then for any eigenbasis $\{e_j\}$, there is a full density subsequence of eigenfunctions $\{e_{j_k}\}\subset\{e_j\}$ such that
\begin{equation}\label{eq:QE}
\langle Ae_{j_k},e_{j_k}\big\rangle\to\mu_L(\sigma(A))\quad\text{as }k\to\infty
\end{equation}
for any pseudodifferential operator $A$ of order $0$ with principal symbol $\sigma(A)$. 
\end{thm}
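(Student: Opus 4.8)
The plan is to run the classical \v Snirel'man--Zelditch--Colin de Verdi\`ere argument, which reduces Theorem~\ref{thm:QE} to a \emph{quantum variance} estimate. It suffices to treat a self-adjoint $\Psi$DO $A$ of order $0$ (in the general case, split $A$ into its self-adjoint real and imaginary parts), and after subtracting the scalar $\mu_L(\sigma(A))$ we may assume $\mu_L(\sigma(A))=0$. With $N(\lambda):=\#\{j:\lambda_j\le\lambda\}$, the goal becomes
\[
\limsup_{\lambda\to\infty}\ \frac{1}{N(\lambda)}\sum_{\lambda_j\le\lambda}\big|\langle Ae_j,e_j\rangle\big|^2=0 .
\]
Granting this, a Chebyshev-type lemma yields a density-one subsequence along which $\langle Ae_j,e_j\rangle\to 0$. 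A diagonal extraction over a countable family $\{A_k\}$ of order-$0$ $\Psi$DOs whose principal symbols are dense in $C(S^*\M)$ then produces a single density-one subsequence that works for every $A_k$; the sharp G\aa rding inequality (which gives $\limsup_j|\langle Be_j,e_j\rangle|\le\|\sigma(B)\|_{C(S^*\M)}$ for every order-$0$ $\Psi$DO $B$), together with $|\mu_L(\sigma(B))|\le\|\sigma(B)\|_{C(S^*\M)}$, transfers the conclusion to all $A$.

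The two analytic inputs are the \emph{local Weyl law} --- for any order-$0$ $\Psi$DO $B$,
\[
\frac{1}{N(\lambda)}\sum_{\lambda_j\le\lambda}\langle Be_j,e_j\rangle\ \longrightarrow\ \mu_L(\sigma(B))\qquad(\lambda\to\infty),
\]
which follows from the standard asymptotics of the restricted trace $\sum_{\lambda_j\le\lambda}\langle Be_j,e_j\rangle$ and Weyl's law $N(\lambda)\sim c_n\Vol(\M)\lambda^n$ --- and \emph{Egorov's theorem}: writing $U_t=e^{it\sqrt{\Delta}}$ for the half-wave group, for each fixed $t$ the operator $U_{-t}BU_t$ is again an order-$0$ $\Psi$DO with principal symbol $\sigma(B)\circ G_t$, modulo a $\Psi$DO of order $-1$.

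To combine them, note that $\Delta e_j=\lambda_j^2 e_j$ gives $U_te_j=e^{it\lambda_j}e_j$, so $\langle Ae_j,e_j\rangle=\langle U_{-t}AU_t\,e_j,e_j\rangle$ for all $t$, and hence
\[
\langle Ae_j,e_j\rangle=\langle \langle A\rangle_T\,e_j,e_j\rangle,\qquad \langle A\rangle_T:=\frac1T\int_0^T U_{-t}AU_t\,dt .
\]
By Egorov, $B_T:=\langle A\rangle_T$ is an order-$0$ $\Psi$DO with principal symbol $\langle\sigma(A)\rangle_T:=\frac1T\int_0^T\sigma(A)\circ G_t\,dt$. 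Cauchy--Schwarz and $\|e_j\|=1$ give $|\langle Ae_j,e_j\rangle|^2=|\langle B_Te_j,e_j\rangle|^2\le\langle B_T^*B_T\,e_j,e_j\rangle$, and applying the local Weyl law to the order-$0$ operator $B_T^*B_T$ (principal symbol $|\langle\sigma(A)\rangle_T|^2$) yields, for each fixed $T$,
\[
\limsup_{\lambda\to\infty}\ \frac{1}{N(\lambda)}\sum_{\lambda_j\le\lambda}\big|\langle Ae_j,e_j\rangle\big|^2\ \le\ \int_{S^*\M}\big|\langle\sigma(A)\rangle_T\big|^2\,d\mu_L .
\]
Now let $T\to\infty$: since $G_t$ is ergodic for $\mu_L$, the von Neumann mean ergodic theorem gives $\langle\sigma(A)\rangle_T\to\mu_L(\sigma(A))=0$ in $L^2(S^*\M,\mu_L)$, so the right-hand side tends to $0$, which is the variance bound above.

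The step I expect to require the most care is the \emph{uniformity in $A$} of the density-one subsequence: one must fix the countable dense family $\{A_k\}$, extract nested density-one subsequences $\Lambda_1\supseteq\Lambda_2\supseteq\cdots$, pass to a diagonal subsequence which is still density one, and then pass from $\{A_k\}$ to arbitrary $A$ using the sharp-G\aa rding estimate recorded above. The remaining work --- invoking the local Weyl law and Egorov's theorem and tracking their order-$(-1)$ and lower-order remainders, all of which vanish in the limits taken --- is routine.
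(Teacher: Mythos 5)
The paper does not prove Theorem~\ref{thm:QE}; it is quoted as the classical \v Snirel'man--Zelditch--Colin de Verdi\`ere theorem and attributed to \cite{Sn, Ze1, CdV}. Your argument --- reduction to the quantum variance estimate via Egorov's theorem, the local Weyl law, Cauchy--Schwarz, and the mean ergodic theorem, followed by the Chebyshev/diagonal extraction over a countable symbol-dense family and the sharp G\aa rding transfer to general $A$ --- is precisely the standard proof given in those references, and it is correct.
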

Here, the density $D$ of a subsequence $J=\{j_k\}\subset\N$ is defined as
$$D(J)=\lim_{N\to\infty}\frac{\#\{j_k<N\}}{N}\quad\text{if it exists}.$$
When $D=0$ ($>0$ or $=1$), we call such subsequence a zero (positive or full) density subsequence. 

If $\langle Ae_j,e_j\big\rangle\to\mu_L(\sigma(A))$ as $j\to\infty$ for any eigenbasis $\{e_j\}$, then we say that the system of geodesic flow is quantum unique ergodic (QUE). Hassell \cite{Has} showed that classical ergodicity of $G_t$ is not sufficient to guarantee QUE. In fact, in the case of generic Bunimovich stadia, there exists zero density subsequence of eigenfunctions such that \eqref{eq:QE} is invalid.

On negatively curved manifolds (i.e. all the sectional curvatures are negative everywhere), $G_t$ has stronger properties than ergodicity e.g. central limiting, strong-mixing, and exponentially decay of correlations, etc. The quantum unique ergodicity conjecture states that QUE is valid on any compact negatively curved manifold, see Rudnick-Sarnak \cite{RS}. Restricting to Hecke eigenbases, QUE has been verified in special cases when $\M$ is arithmetic, by Lindenstrauss \cite{Lin}, Silbermann-Venkatesh \cite{SV}, Holowinsky-Soundararajan \cite{HS}, and Brooks-Lindenstrauss \cite{BrLi}.

One consequence of Theorem \ref{thm:QE} is that, the full density subsequence of eigenfunctions $\{e_{j_k}\}$ of any eigenbasis $\{e_j\}$ displays equidistribution asymptotically: Let $\Omega\subset\M$ be a Borel subset with measure-zero boundary. Then by Portmanteau theorem (c.f. Sogge \cite[Theorem 6.2.5]{So1}), we have
\begin{equation}\label{eq:einM}
\int_\Omega|e_{j_k}|^2\,d\Vol\to\frac{\Vol(\Omega)}{\Vol(\M)}\quad\text{as }k\to\infty.
\end{equation}
Here, $\Vol=\Vol_g$ is the Riemannian volume on $\M$, $\Omega$ is a fixed set independent of $\lambda$ and we say that $\Omega$ is of scale $O(1)$ (following the convention in \cite{Han}). Motivated by the applications of eigenfunction equidistribution in regions at small scales $r(\lambda)\to0$ as $\lambda\to\infty$, the author proposed the following question (\cite[Question 1.3]{Han}).

\begin{question}[Small scale equidistribution]\label{q:sseinM}
Let $\rho\in(0,1)$. Given any eigenbasis $\{e_j\}$, does there exist a full density subsequence $\{e_{j_k}\}$ such that
\begin{equation}\label{eq:sseinM}
\int_{B(x,r_{j_k})}|e_{j_k}|^2\,d\Vol=\frac{\Vol(B(x,r_{j_k}))}{\Vol(\M)}+o(r_{j_k}^n)\quad\text{as }k\to\infty
\end{equation}
for $r_{j_k}=r(\lambda_{j_k})=\lambda_{j_k}^{-\rho}$ and all $x\in\M$? Here, $B(x,r)$ is a geodesic ball in $\M$ with center $x$ and radius $r$.
\end{question} 

On arithmetic hyperbolic manifolds, such small scale equidistribution properties of eigenfunctions have already been considered in e.g. Luo-Sarnak \cite{LS} and Young \cite{Y}.

On negatively curved manifolds, small scale equidistribution was proved at logarithmic scales by the author \cite{Han} and Hezari-Rivi\`ere \cite{HR1}. Precisely,
\begin{thm}[Small scale equidistribution on negatively curved manifolds]\label{thm:ssencm}
Let $\M$ be negatively curved. Denote $r_j=(\log\lambda_j)^{-\alpha}$.
\begin{enumerate}[(i).]
\item Assume that $\alpha\in[0,1/(2n))$. Fix a point $x_0\in\M$. Then given any eigenbasis $\{e_j\}$, there exists a full density subsequence $\{e_{j_k}\}$ (depending on $x_0$) such that 
$$\int_{B(x_0,r_{j_k})}|e_{j_k}|^2\,d\Vol=\frac{\Vol(B(x_0,r_{j_k}))}{\Vol(\M)}+o(r_{j_k}^n)\quad\text{as }k\to\infty.$$
\item Assume that $\alpha\in[0,1/(3n))$. Then given any eigenbasis $\{e_j\}$, there exists a full density subsequence $\{e_{j_k}\}$ such that
\begin{equation}\label{eq:uniforminM}
c\Vol(B\big(x,r_{j_k}))\le\int_{B(x,r_{j_k})}|e_{j_k}|^2\,d\Vol\le C\Vol(B(x,r_{j_k}))\quad\text{as }k\to\infty,
\end{equation}
uniformly for all $x\in\M$, where the positive constants $c$ and $C$ depend only on $\M$.
\end{enumerate}
\end{thm}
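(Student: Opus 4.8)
The plan is to deduce both parts from a single quantitative estimate -- a logarithmic rate of quantum ergodicity whose implied constant is \emph{uniform in the oscillation scale} of the test function -- and then to extract the full-density subsequence by a Chebyshev plus diagonalisation argument, using in part (ii) an elementary covering lemma. Throughout write $N(\lambda)=\#\{j:\lambda_j\le\lambda\}$ and $r=r(\lambda)=(\log\lambda)^{-\alpha}$, and recall that the injectivity radius exceeds $1$, so $\Vol(B(x,\rho))\asymp\rho^n$ for $\rho<1$. A routine first reduction: fix $x_0$ and sandwich $\mathbf 1_{B(x_0,r)}$ between smooth functions $\chi_r^{\pm}$ on $\M$ with $0\le\chi_r^-\le\mathbf 1_{B(x_0,r)}\le\chi_r^+\le1$ and $\supp\chi_r^{\pm}\subset B(x_0,r\pm r^{1+\delta_0})$; then $\langle\chi_r^-e_j,e_j\rangle\le\int_{B(x_0,r)}|e_j|^2\,d\Vol\le\langle\chi_r^+e_j,e_j\rangle$, the means obey $\Vol(\M)^{-1}\int_\M\chi_r^{\pm}\,d\Vol=\tfrac{\Vol(B(x_0,r))}{\Vol(\M)}+O(r^{n+\delta_0})$, and each $C^k$-norm of $\chi_r^{\pm}$ is $O(r^{-k(1+\delta_0)})=(\log\lambda)^{O(1)}$; so it suffices to control $\langle\chi_r^{\pm}e_j,e_j\rangle$.

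The crux is to establish that there is $C=C(\M)$ such that, for every real $a\in C^\infty(\M)$ with $0\le a\le1$ whose $C^k$-norms are bounded by a power of $\log\lambda$,
$$\frac1{N(\lambda)}\sum_{\lambda_j\le\lambda}\Big|\langle a\,e_j,e_j\rangle-\frac1{\Vol(\M)}\int_\M a\,d\Vol\Big|^2\le C\,\frac{\log\log\lambda}{\log\lambda},$$
with $C$ \emph{not depending on the scale on which $a$ oscillates}. I would prove this as follows. Since $U_te_j=e^{-it\lambda_j}e_j$ with $U_t=e^{-it\sqrt\Delta}$, one has $\langle a\,e_j,e_j\rangle=\langle A_Te_j,e_j\rangle$ for $A_T:=\tfrac1{2T}\int_{-T}^T U_t\,a\,U_{-t}\,dt$ and any $T>0$. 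Taking $T\asymp c_n\log\lambda$ with $c_n=c_n(\M)$ small, Egorov's theorem identifies $A_T$ with the quantization of the classical time-average $a_T$ on $S^*\M$ of the lift of $a$, up to an $L^2$-error $O(\lambda^{-1}e^{O(T)})=O(\lambda^{-1+O(c_n)})$; here $c_n$ is chosen small enough that the $C^k$-norm of the propagated symbol, which grows like $e^{O(T)}=\lambda^{O(c_n)}$, does not cancel the gain. Then Cauchy--Schwarz, $|\langle Be_j,e_j\rangle|^2\le\langle B^2e_j,e_j\rangle$ for self-adjoint $B$ (using $\|e_j\|=1$), applied to $B=\Op(a_T-\bar a)$ with $\bar a:=\Vol(\M)^{-1}\int_\M a\,d\Vol$, followed by the local Weyl law -- whose remainder against a symbol of $C^k$-norm $\le\lambda^{O(c_n)}$ is $O(\lambda^{-\kappa})$ for some $\kappa=\kappa(\M)>0$ -- bounds the left side of the display by $\mu_L(|\phi_T|^2)+O(\lambda^{-\kappa})$, where $\phi:=(a\circ\pi)-\mu_L(a\circ\pi)$ and $\phi_T:=\tfrac1{2T}\int_{-T}^T\phi\circ G_t\,dt$. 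Expanding the square and using $G_t$-invariance of $\mu_L$ gives $\mu_L(|\phi_T|^2)\le\tfrac1{2T}\int_{-2T}^{2T}|C_\phi(\tau)|\,d\tau$ with $C_\phi(\tau)=\mu_L(\phi\cdot(\phi\circ G_\tau))$. Finally, the geodesic flow of a negatively curved manifold is a contact Anosov flow, hence exponentially mixing for H\"older observables: $|C_\phi(\tau)|\lesssim e^{-\gamma|\tau|}\|\phi\|_{C^\eta}^2$; together with $|C_\phi|\le\|\phi\|_\infty^2\le1$ and $\|\phi\|_{C^\eta}\lesssim r^{-\eta}$ this yields $\int_{-2T}^{2T}|C_\phi|\,d\tau\lesssim\log(1/r)+1\asymp\log\log\lambda$, and dividing by $T\asymp\log\lambda$ completes the estimate.

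Granting this, part (i) is Chebyshev: since $r_j=(\log\lambda_j)^{-\alpha}$ is slowly varying, on a dyadic window $\lambda_j\in[\Lambda,2\Lambda]$ all radii agree with $r_\Lambda:=(\log\Lambda)^{-\alpha}$ up to $1+o(1)$ -- the difference being a thin annulus of volume $o(r_\Lambda^n)$, to which the estimate also applies -- so, in its dyadic-window form applied to $\chi_{r_\Lambda}^{\pm}$, the estimate shows that for each fixed $\varepsilon>0$ the density within the window of indices with $\big|\int_{B(x_0,r_j)}|e_j|^2\,d\Vol-\tfrac{\Vol(B(x_0,r_j))}{\Vol(\M)}\big|>\varepsilon\,r_j^n$ is $\lesssim\varepsilon^{-2}(\log\log\Lambda)(\log\Lambda)^{-(1-2\alpha n)}$. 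This tends to $0$ exactly when $\alpha<1/(2n)$; summing over dyadic windows and diagonalising over $\varepsilon\downarrow0$ produces the claimed density-one subsequence. For part (ii) the centre is free, so I would take a maximal $(r/10)$-separated net $\{z_i\}_{i=1}^{M}$ in $\M$, $M\asymp r^{-n}$. For each $i$ the bounds $\tfrac12\le\Vol(\M)\int_{B(z_i,r/20)}|e_j|^2\,d\Vol\big/\Vol(B(z_i,r/20))$ and $\Vol(\M)\int_{B(z_i,2r)}|e_j|^2\,d\Vol\big/\Vol(B(z_i,2r))\le2$ can fail only on a set of density $\lesssim(\log\log\Lambda)(\log\Lambda)^{-1}r^{-2n}$, hence all $2M$ hold off a set of density $\lesssim(\log\log\Lambda)(\log\Lambda)^{-1}r^{-3n}$; a covering lemma then upgrades these finitely many net bounds to \eqref{eq:uniforminM} for \emph{every} $x$, since any $B(x,r)$ is trapped between $B(z_i,r/20)$ and $B(z_i,2r)$ for the nearest $z_i$ and all three balls have volume $\asymp r^n$. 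Now $(\log\log\Lambda)(\log\Lambda)^{-1}r^{-3n}=(\log\log\Lambda)(\log\Lambda)^{-(1-3\alpha n)}\to0$ requires $\alpha<1/(3n)$, and the same diagonal argument concludes.

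\textbf{The main obstacle} is the scale-uniform estimate of the second paragraph. A standard quantitative quantum ergodicity bound carries a constant depending on finitely many $C^k$-norms of the observable, and for $a=\chi_r^{\pm}$ these grow like $r^{-k}$, destroying the gain as $r\to0$. One must therefore verify that $r$ enters only (a) inside a logarithm, through the crossover time $\asymp\log(1/r)$ at which exponential mixing overtakes the trivial $L^\infty$ bound on the correlation, and (b) through harmless $\lambda^{O(c_n)}$ factors in the Egorov and Weyl remainders -- both negligible because $r=(\log\lambda)^{-\alpha}$. This forces $T$ between two opposing demands -- as large as possible to exploit mixing, yet $T\le c_n\log\lambda$ to keep the semiclassical remainders small -- and it is precisely the resulting balance, schematically $r^{2n}\log\lambda\to\infty$ (degraded by a further power $r^{-n}$ from the union bound over the net in part (ii)), that pins the thresholds at $1/(2n)$ and $1/(3n)$. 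A subsidiary technical point, handled by applying the same estimate to the relevant thin annuli, is absorbing the $j$-dependence of $r_j$ within a window together with the $O(r^{n+\delta_0})$ smoothing error.
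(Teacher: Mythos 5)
Your proposal is correct and follows essentially the same route as the original proof of this theorem (which the present paper only quotes from \cite{Han} and \cite{HR1}): a scale-uniform quantitative quantum ergodicity variance bound obtained by time-averaging to Ehrenfest time $T\approx c\log\lambda$ via Egorov and the local Weyl law, with Liverani's exponential decay of correlations for the contact Anosov geodesic flow controlling the classical variance at scale $r=(\log\lambda)^{-\alpha}$, followed by Chebyshev with threshold $\varepsilon r^n$ and diagonalisation for (i) and a union bound over an $\approx r^{-n}$ net plus a covering argument for (ii). Your balance of exponents, yielding the thresholds $1/(2n)$ and $1/(3n)$, matches the cited argument.
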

Note that the range of $\alpha$ in (ii) was improved to $\alpha\in[0,1/(2n))$ (same as in (i)) by Hezari-Rivi\`ere \cite{HR1}. The equidistribution of eigenfunctions at logarithmic scales has since been applied to the $L^p$ norm and nodal set estimates of eigenfunctions by Hezari-Rivi\`ere \cite{HR1} and Sogge \cite{So2} and counting nodal domains of eigenfunctions by Zelditch \cite{Ze3}. See also a recent survey by Sogge \cite{So3}.

Now we switch our attention to the case when the geodesic flow is not necessarily ergodic. For example, on the $2$-$\dim$ sphere $\s^2$ where the geodesic flow is completely integrable (thus is not ergodic), the standard eigenbasis fails \eqref{eq:QE}. However, there is an infinite dimensional space of eigenbases due to high multiplicity of eigenvalues; this space carries a natural probability measure. (See \S\ref{sec:spectral} for the precise definition.) Zelditch \cite{Ze2} proved that with respect to this probability measure, almost surely a random eigenbasis $\{u_j\}$ contains a subsequence $\{u_{j_k}\}$ such that $\langle Au_{j_k},u_{j_k}\rangle\to\mu_L(\sigma(A))$ for any pseudodifferential operator $A$ of order $0$; VanderKam \cite{V} improved this result that almost surely $\langle Au_j,u_j\rangle\to\mu_L(\sigma(A))$ for a random eigenbasis $\{u_j\}$. As a consequence, even though the standard eigenbasis is not equidistributed, a typical eigenbasis is.

In this paper, we investigate small scale equidistribution of eigenbases on $\s^2$. In fact, our main theorem holds on the manifold $\M$ where
\begin{enumerate}
\item[(M1).] the group of isometries acts transitively on $\M$; 
\item[(M2).] the multiplicity $m_\lambda$ of eigenfrequency $\lambda$ satisfies
\begin{equation}\label{eq:M2}
\liminf_{\lambda\to\infty}\frac{m_\lambda}{\log\lambda}:=L_\M>0.
\end{equation}
\end{enumerate}
Then the $n$-$\dim$ sphere $\s^n$ ($n\ge2$) and the $n$-$\dim$ torus $\T^n$ ($n\ge5$) satisfies (M1) and (M2). See \S\ref{sec:ST} for the background about eigenfunctions on the spheres and the tori.

Assuming (M1) and (M2) on $\M$, we study the small scale equidistribution of the whole sequence of a random eigenbasis (as opposite to a full density subsequence as in Question \ref{q:sseinM} and Theorem \ref{thm:ssencm}). Let $\B$ be the space of eigenbases with its natural probability measure. Our main theorem states
\begin{thm}[Small scale equidistribution of random eigenbases]\label{thm:sserandom}
Assume that $\M$ satisfies (M1) and (M2). Let
$$r_j=m_{\lambda_j}^{-\alpha}\quad\text{for }0\le\alpha<\frac{1}{2n}.$$ 
Then almost surely, a random eigenbasis $\{u_j\}\in\B$ satisfies 
\begin{equation}\label{eq:sserandom}
\int_{B(x,r_j)}|u_j|^2\,d\Vol=\frac{\Vol(B(x,r_j))}{\Vol(\M)}+o(r_j^n)\quad\text{as }j\to\infty.
\end{equation}
uniformly for all $x\in\M$.
\end{thm}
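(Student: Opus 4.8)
The plan is to reduce the uniform small-scale equidistribution statement to a large-deviation estimate for a single ball, and then to combine it with a covering argument so as to pass from one fixed ball to all balls simultaneously. The starting point is the observation that, by (M1), the space $\mathcal M_\lambda$ of eigenfunctions of frequency $\lambda$ carries a transitive isometry action, so that the $L^2$-normalised spherical-harmonic-type basis of $\mathcal M_\lambda$ may be used to build, for a random eigenbasis $\{u_j\}\in\B$, each $u_j$ as a Gaussian (or uniform-on-the-sphere $S^{m_\lambda-1}$) random linear combination of a fixed orthonormal basis of $\mathcal M_{\lambda_j}$. For each fixed $x\in\M$ and each $j$, set
\[
X_j(x)=\int_{B(x,r_j)}|u_j|^2\,d\Vol-\frac{\Vol(B(x,r_j))}{\Vol(\M)} .
\]
One computes that $\E[X_j(x)]=0$ (this uses the transitivity from (M1) to symmetrise the diagonal of the kernel), so the content is to control the fluctuation of $X_j(x)$ around its mean and show it is $o(r_j^n)$ with overwhelming probability.

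The key probabilistic input is Lévy's concentration of measure on the high-dimensional sphere $S^{m_\lambda-1}$: the function $u\mapsto X_j(x)$ is Lipschitz on the sphere of eigenfunctions with a controlled Lipschitz constant (controlled by sup-norm bounds on the eigenfunctions, i.e.\ by $\lambda_j^{(n-1)/2}$ in the worst case, but on $\s^n$ and $\T^n$ one has the precise spectral projector asymptotics that do better), so Lévy's inequality gives a bound of the shape
\[
\mathrm{Pr}\bigl(|X_j(x)|>t\bigr)\le C\exp\bigl(-c\,m_{\lambda_j}\,t^2/\kappa_j^2\bigr),
\]
where $\kappa_j$ is the Lipschitz constant. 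Taking $t\asymp r_j^n=m_{\lambda_j}^{-n\alpha}$ and tracking the exponent, the condition $\alpha<1/(2n)$ is exactly what makes $m_{\lambda_j}t^2/\kappa_j^2\to\infty$ fast enough — in fact, by (M2), $m_{\lambda_j}\gtrsim \log\lambda_j \gtrsim \log j$, so the probability that $|X_j(x)|>t$ is summable (or summable after we put in the covering loss below). This is where (M2)'s logarithmic growth is used, and it is the step I expect to be the main obstacle: one must be careful that the Lipschitz constant $\kappa_j$ does not eat up the gain, which forces the use of the sup-norm bounds and of the precise exponent $1/(2n)$ rather than something cruder; on $\T^n$ this is exactly why one needs $n\ge5$, where the multiplicity grows polynomially.

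To upgrade from a single $x$ to all $x\in\M$ uniformly, I would take an $r_j$-net (or $\epsilon r_j$-net) $\mathcal N_j\subset\M$ of cardinality $O(r_j^{-n})=O(m_{\lambda_j}^{n\alpha})$, apply the single-ball estimate at each point of $\mathcal N_j$ together with a union bound — the polynomial cardinality is absorbed by the exponential smallness from Lévy as long as $\alpha<1/(2n)$ — and then use a continuity/covering lemma (the one advertised in the abstract) to control $\sup_x|X_j(x)|$ by the maximum over the net plus an error coming from how much $\int_{B(x,r_j)}|u_j|^2$ can change as $x$ varies over a distance $\epsilon r_j$; this error is again handled by the gradient/sup-norm bound for $u_j$ and is $o(r_j^n)$ after choosing $\epsilon$ small. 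Finally, Borel--Cantelli applied to the resulting summable bound on $\mathrm{Pr}(\sup_x|X_j(x)|>r_j^n/\log j)$ (say) gives that almost surely \eqref{eq:sserandom} holds for all $j$ large, which is the claim; note that because the bound is summable in $j$ we control the \emph{whole} sequence, not merely a full-density subsequence.
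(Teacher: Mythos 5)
Your overall skeleton (fixed-ball concentration via L\'evy on the high-dimensional sphere, then a net plus union bound, then a Borel--Cantelli-type conclusion) is exactly the paper's strategy, but two of your quantitative steps, as written, would fail. The crucial one is the Lipschitz constant. You propose to control $\kappa_j=\|u\mapsto X_j(x)\|_\Lip$ by sup-norm bounds on eigenfunctions, i.e.\ $\kappa_j\sim\lambda_j^{(n-1)/2}\sim m_{\lambda_j}^{1/2}$; but then the L\'evy exponent $m_{\lambda_j}t^2/\kappa_j^2\approx t^2\to0$ for $t\asymp r_j^n$, and the concentration bound gives nothing — your claim that $\alpha<1/(2n)$ makes this exponent blow up tacitly assumes $\kappa_j=O(1)$, contradicting your stated $\kappa_j$. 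The correct (and much simpler) point, which the paper uses, is that the functional $u\mapsto\int_{B(x,r_j)}|u|^2$ has Lipschitz constant $O(1)$ \emph{independent of $j$} on the $L^2$-unit sphere of the eigenspace: writing $|u|^2-|v|^2=(|u|-|v|)(|u|+|v|)$ and applying Cauchy--Schwarz gives $|F(u)-F(v)|\le\|u-v\|_{L^2(\M)}\,(\|u\|_{L^2}+\|v\|_{L^2})\le2\,\dist(u,v)$. No sup-norm input enters here (and on $\s^n$ the bound $\|u\|_\infty\lesssim m_k^{1/2}\|u\|_2$ is sharp, so projector asymptotics cannot ``do better''; likewise $n\ge5$ on $\T^n$ is needed for (M2), not for any Lipschitz constant). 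With $\kappa_j=O(1)$ the exponent is $m_{\lambda_j}^{1-2n\alpha}$, which is where $\alpha<1/(2n)$ and (M2) actually enter; note also that your threshold $r_j^n/\log j$ only weakens this exponent, whereas the paper runs the union over $k$ (and over the $m_k$ members of the basis, each uniform on the sphere) at thresholds $t_l=l\,m_k^{-\beta}$ with $\alpha n<\beta<1/2$ and then lets $l\to\infty$, keeping a positive power of $m_k$ in the exponent.

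The second gap is the net scale. Your $\epsilon r_j$-net with a fixed small $\epsilon$ is too coarse: the only pointwise control available is $\|u_j\|_{L^\infty}^2\lesssim m_{\lambda_j}$, so moving the center by $\epsilon r_j$ can change $\int_{B(\cdot,r_j)}|u_j|^2$ by as much as $\sim m_{\lambda_j}\,\epsilon\, r_j^{\,n}$ (sup-norm squared times the volume of the symmetric difference), which is \emph{not} $o(r_j^n)$ for fixed $\epsilon$ since $m_{\lambda_j}\to\infty$. One needs the net at a scale polynomially finer than $r_j$ — the paper takes $s_k=\lambda_k^{-\gamma}$ with $\gamma$ large, so that both the volume discrepancy and $\|u\|_\infty^2\cdot\Vol\bigl(B(x_p,r_k+s_k)\setminus B(x_p,r_k-s_k)\bigr)$ are $o(r_k^n)$ — and the resulting polynomial cardinality $N\lesssim\lambda_k^{\gamma n}$ is still absorbed by the exponential concentration in the union bound. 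Both issues are repairable, but as stated the two estimates on which your argument hinges do not hold.
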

Therefore, a random eigenbasis is equidistributed at small scales (depending on the multiplicity growth rate) almost surely. The present work is inspired by Burq-Lebeau \cite{BuLe1, BuLe2} and also Shiffman-Zelditch \cite{SZ}. In \cite{BuLe1, BuLe2}, Burq-Lebeau proved (among other results) that assuming similar conditions as (M1) and (M2), a random eigenbasis is almost surely bounded in $L^p$ norms, $2\le p<\infty$; In \cite{SZ}, Shiffman-Zelditch proved (among other results) that a random holomorphic section sequence is almost surely bounded in $L^p$ norms, $2\le p<\infty$, on a compact K\"ahler manifold. Their results, as well as ours, are consequences of multiplicity growth and Levy concentration of measures (see \S\ref{sec:prob}). 

\begin{rmk}
We can prove a similar result to Theorem \ref{thm:sserandom} if (M2) is only valid for a subsequence of eigenfrequencies $\{\lambda_{j_k}\}$: Let $\tilde\B$ be the space of subsequences of eigenfunctions with eigenfrequencies $\{\lambda_{j_k}\}$ endowed with a natural probability measure. Then a random subsequence of eigenfunctions $\{u_{j_k}\}\in\tilde\B$ satisfies \eqref{eq:sserandom} almost surely. Moreover, if in addition $m_{\lambda_j}=1$ for all $j\ne j_k$, then the probability measure on $\tilde\B$ is equivalent to the one on $\B$; we know that $u_j$, $j\ne j_k$, is equidistributed at all scales by Lemma \ref{lemma:spectralproj}; hence Theorem \ref{thm:sserandom} is still valid in this case.

We shall also remark that (M2) on the logarithmic growth of multiplicity is the minimal growth rate for the purpose to apply the argument in this paper. This is due to the exponential concentration rate in Levy concentration of measures. (See \S\ref{sec:proof} for more details of the proof.)
\end{rmk}

We next apply Theorem \ref{thm:sserandom} to the most well-known examples: the spheres and the tori; some of the basic facts about eigenfunctions on the spheres and the tori are gathered in \S\ref{sec:ST}. On $\s^n$ ($n\ge2$), $m_\lambda\gtrsim\lambda^{n-1}$. So by Theorem \ref{thm:sserandom},
\begin{cor}[Small scale equidistribution of random eigenbases on the spheres]\label{cor:sserandomspheres}
On $\s^n$ for $n\ge2$, let 
$$r_j=\lambda_j^{-\rho}\quad\text{for }0\le\rho<\frac{n-1}{2n}.$$
Then almost surely, a random eigenbasis $\{u_j\}\in\B$ satisfies \eqref{eq:sserandom} uniformly for all $x\in\s^n$.
\end{cor}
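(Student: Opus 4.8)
The plan is to derive this directly from Theorem \ref{thm:sserandom} by relating the multiplicity-scale $m_{\lambda_j}^{-\alpha}$ to the eigenfrequency-scale $\lambda_j^{-\rho}$. First I would recall that on $\s^n$ the distinct eigenvalues are $\lambda^2 = k(k+n-1)$ for $k \in \N_0$, and the multiplicity of the eigenfrequency $\lambda = \lambda(k) = \sqrt{k(k+n-1)}$ equals $\dim$ of the space of spherical harmonics of degree $k$, which is a polynomial in $k$ of degree $n-1$; in particular there exist constants $0 < c_n \le C_n$ with $c_n \lambda^{n-1} \le m_\lambda \le C_n \lambda^{n-1}$ for all large $\lambda$, so $m_\lambda \gtrsim \lambda^{n-1}$ as asserted in the excerpt. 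Since $\s^n$ is a compact homogeneous space, its isometry group $O(n+1)$ acts transitively, so (M1) holds; and the lower bound $m_\lambda \gtrsim \lambda^{n-1}$ with $n \ge 2$ gives $m_\lambda / \log\lambda \to \infty$, so (M2) holds with $L_{\s^n} = +\infty$. Hence Theorem \ref{thm:sserandom} applies.

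Next I would do the scale bookkeeping. Fix $\rho$ with $0 \le \rho < (n-1)/(2n)$. I want to choose $\alpha$ with $0 \le \alpha < 1/(2n)$ so that $m_{\lambda_j}^{-\alpha}$ matches $\lambda_j^{-\rho}$ up to constants, or — more robustly — so that the radii produced by Theorem \ref{thm:sserandom} are at least as large as $\lambda_j^{-\rho}$ up to a constant, which suffices since equidistribution at a larger scale is the stronger statement here (the error term $o(r_j^n)$ is relative to the ball volume). Using $m_{\lambda_j} \asymp \lambda_j^{n-1}$ we get $m_{\lambda_j}^{-\alpha} \asymp \lambda_j^{-\alpha(n-1)}$, so I would set $\alpha = \rho/(n-1)$; the constraint $\rho < (n-1)/(2n)$ is precisely equivalent to $\alpha = \rho/(n-1) < 1/(2n)$, and $\rho \ge 0$ gives $\alpha \ge 0$. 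Thus this $\alpha$ lies in the admissible range of Theorem \ref{thm:sserandom}.

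Finally I would transfer the conclusion: by Theorem \ref{thm:sserandom} applied with this $\alpha$, almost surely a random eigenbasis $\{u_j\} \in \B$ satisfies, uniformly in $x \in \s^n$,
\begin{equation*}
\int_{B(x, m_{\lambda_j}^{-\alpha})} |u_j|^2 \, d\Vol = \frac{\Vol(B(x, m_{\lambda_j}^{-\alpha}))}{\Vol(\s^n)} + o\big(m_{\lambda_j}^{-\alpha n}\big) \quad \text{as } j \to \infty.
\end{equation*}
Since $m_{\lambda_j}^{-\alpha} = c_j \lambda_j^{-\rho}$ with $c_j \asymp 1$ bounded above and below, writing $r_j = \lambda_j^{-\rho}$ we have $B(x, r_j) \subset B(x, C r_j)$ and the ratio of volumes $\Vol(B(x, m_{\lambda_j}^{-\alpha}))/\Vol(B(x, r_j))$ stays in a compact subset of $(0,\infty)$; moreover on a manifold $\Vol(B(x,r)) \asymp r^n$ for $r$ below the injectivity radius, so $o(m_{\lambda_j}^{-\alpha n}) = o(r_j^n)$ and the main term rewrites as $\Vol(B(x,r_j))/\Vol(\s^n)$ up to an error absorbed in $o(r_j^n)$. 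Hence \eqref{eq:sserandom} holds for $r_j = \lambda_j^{-\rho}$, uniformly in $x$, almost surely. The only mild subtlety — not really an obstacle — is making the passage between the two radii fully rigorous: one should either observe that Theorem \ref{thm:sserandom} in fact holds verbatim for any radius sequence comparable to $m_{\lambda_j}^{-\alpha}$ (which is clear from its proof, since the argument only uses the scale exponent), or carry out the elementary comparison of ball volumes and error terms sketched above. I expect no genuine difficulty here; the content is entirely in Theorem \ref{thm:sserandom}, and this corollary is a dictionary translation from multiplicity exponents to frequency exponents on $\s^n$.
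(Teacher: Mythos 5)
Your proposal follows essentially the paper's own route: the paper proves this corollary in one line, by noting that on $\s^n$ the isometry group acts transitively and $m_\lambda\approx\lambda^{n-1}$ (so (M1) and (M2) hold), and then invoking Theorem \ref{thm:sserandom}; your exponent bookkeeping $\alpha=\rho/(n-1)$, with $\rho<(n-1)/(2n)$ equivalent to $\alpha<1/(2n)$, is exactly this translation. One caution about your closing ``mild subtlety'': of the two ways you offer to pass from the radius $m_{\lambda_j}^{-\alpha}$ to $r_j=\lambda_j^{-\rho}$, only the first is valid. Since $m_{\lambda_j}^{-\alpha}=c_j\lambda_j^{-\rho}$ with $c_j$ comparable to, but in general bounded away from, $1$, the symmetric difference of the two balls is an annulus of width $\asymp r_j$ and hence of volume $\asymp r_j^n$; so neither the main terms $\Vol(B(x,m_{\lambda_j}^{-\alpha}))$ and $\Vol(B(x,r_j))$ nor the corresponding $L^2$-masses agree up to $o(r_j^n)$, the proposed ``elementary comparison of ball volumes and error terms'' cannot absorb the discrepancy, and equidistribution at one comparable scale does not formally imply it at another. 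The correct justification is the one you state first: the proof of Theorem \ref{thm:sserandom} only uses that $r_k^n\gg m_k^{-\beta}$ for some $\beta<1/2$ (together with $r_k\ge s_k$ in the covering step), so it runs verbatim with $r_k=\lambda_k^{-\rho}$ when $m_k\approx\lambda_k^{n-1}$ and $\rho n<(n-1)/2$, which is what the paper implicitly does.
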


Note that the multiplicity growth on the spheres achieves the maximal rate that $m_\lambda\approx\lambda^{n-1}$. (See \S\ref{sec:spectral}.) Therefore, the range of the scale for equidistribution in Corollary \ref{cor:sserandomspheres} is best that one can get from Theorem \ref{thm:sserandom}. 

On $\T^n$ ($n=2,3,4$), (M2) fails; however, there are subsequences of eigenfrequencies that \eqref{eq:M2} is valid so one can derive the corresponding result about these random subsequences of eigenfunctions according to the remark below Theorem \ref{thm:sserandom}. On $\T^n$ ($n\ge5$), $m_\lambda\gtrsim\lambda^{n-2}$. So by Theorem \ref{thm:sserandom},
\begin{cor}[Small scale equidistribution of random eigenbases on the tori]\label{cor:sserandomtori}
On $\T^n$ for $n\ge5$, let 
$$r_j=\lambda_j^{-\rho}\quad\text{for }0\le\rho<\frac{n-2}{2n}.$$
Then almost surely, a random eigenbasis $\{u_j\}\in\B$ satisfies \eqref{eq:sserandom} uniformly for all $x\in\T^n$.
\end{cor}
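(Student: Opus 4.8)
The plan is to deduce Corollary \ref{cor:sserandomtori} directly from Theorem \ref{thm:sserandom}: there are two points, verifying that $\T^n$ with $n\ge5$ meets the hypotheses (M1)--(M2), and translating the stated scale $r_j=\lambda_j^{-\rho}$ into a multiplicity scale $m_{\lambda_j}^{-\alpha}$ with $\alpha<1/(2n)$. Hypothesis (M1) is immediate, since the translations of $\T^n=\R^n/\mathbb{Z}^n$ act transitively by isometries of the flat metric. For (M2), the eigenfrequencies of $\Delta$ on $\T^n$ are $\lambda=2\pi|k|$, $k\in\mathbb{Z}^n$, and the multiplicity of such a $\lambda$ equals $m_\lambda=r_n(N)$, the number of representations of $N=\lambda^2/(4\pi^2)\in\N$ as a sum of $n$ squares. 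For $n\ge5$ the singular series of this problem is bounded below, whence $r_n(N)\gg N^{n/2-1}$ with an implied constant depending only on $n$; equivalently $m_\lambda\gtrsim\lambda^{\,n-2}$ uniformly along the spectrum, as recalled in \S\ref{sec:ST}. Since $n-2\ge3$, this gives $m_\lambda/\log\lambda\to\infty$, so $L_\M=+\infty>0$ and (M2) holds. (For $n\le4$ the multiplicity $r_n(N)$ fails to grow along suitable subsequences of $N$, so (M2) genuinely fails and one must use the remark after Theorem \ref{thm:sserandom} instead.)

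Next I would fix $\rho\in(0,(n-2)/(2n))$ --- the case $\rho=0$ being the $\alpha=0$ instance of the theorem --- and choose $\alpha$ with $\rho/(n-2)<\alpha<1/(2n)$; such an $\alpha$ exists precisely because $\rho<(n-2)/(2n)$ is equivalent to $\rho/(n-2)<1/(2n)$. Theorem \ref{thm:sserandom} with this $\alpha$ yields a set $\B_\alpha\subset\B$ of full probability on which every eigenbasis $\{u_j\}$ satisfies \eqref{eq:sserandom} at scale $s_j:=m_{\lambda_j}^{-\alpha}$, uniformly in $x$. Because $m_{\lambda_j}\ge c\,\lambda_j^{\,n-2}$ and $\alpha(n-2)>\rho$, we get $s_j\le c^{-\alpha}\lambda_j^{-\alpha(n-2)}=o(\lambda_j^{-\rho})$; in particular, for $j$ large, $s_j\le r_j:=\lambda_j^{-\rho}$ and $s_j/r_j\to0$. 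It then remains to upgrade uniform equidistribution at the smaller scale $s_j$ to the larger scale $r_j$ with the \emph{sharp} leading constant, which is the content of the covering lemma. Concretely, set $f_j(y)=\int_{B(y,s_j)}|u_j|^2\,d\Vol$; since $\T^n$ is flat, $\Vol(B(y,s))=\omega_n s^n$ for all $s<1$, and Fubini gives
\[
\omega_n s_j^n\int_{B(x,\,r_j-s_j)}|u_j|^2\,d\Vol\ \le\ \int_{B(x,r_j)}f_j\,d\Vol\ \le\ \omega_n s_j^n\int_{B(x,\,r_j+s_j)}|u_j|^2\,d\Vol .
\]
By equidistribution of $u_j$ at scale $s_j$, $\int_{B(x,r_j)}f_j\,d\Vol=\omega_n s_j^n\big(\Vol(B(x,r_j))/\Vol(\M)+o(r_j^n)\big)$, while $\int_{B(x,r_j\pm s_j)}|u_j|^2$ differs from $\int_{B(x,r_j)}|u_j|^2$ by the mass on the annulus $B(x,r_j+s_j)\setminus B(x,r_j-s_j)$, of volume $O(r_j^{n-1}s_j)$, which one covers by $O\big((r_j/s_j)^{n-1}\big)$ balls of radius $s_j$, each of mass $O(s_j^n)$; the total is $O(r_j^{n-1}s_j)=o(r_j^n)$. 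Dividing by $\omega_n s_j^n$ gives \eqref{eq:sserandom} at scale $r_j=\lambda_j^{-\rho}$, uniformly in $x$, on $\B_\alpha$; intersecting over $\rho_m\uparrow(n-2)/(2n)$ upgrades this to all admissible $\rho$ at once. This proves the corollary.

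The main obstacle --- and the reason $n\ge5$ appears --- is the scale mismatch: Theorem \ref{thm:sserandom} produces the sharp asymptotic only at the multiplicity scale $m_{\lambda_j}^{-\alpha}$, yet on $\T^n$ the multiplicity $r_n(N)$ oscillates and is never a fixed power of $\lambda_j$. The uniform classical bound $r_n(N)\gg N^{n/2-1}$ (valid exactly for $n\ge5$) is precisely what lets one pick $\alpha<1/(2n)$ with $m_{\lambda_j}^{-\alpha}\le\lambda_j^{-\rho}$ for all large $j$; the subsequent passage to the larger scale costs only a thin annulus, and because $\T^n$ is flat the small balls have volume exactly $\omega_n s_j^n$, so the leading constant $\Vol(B(x,r_j))/\Vol(\M)$ is preserved rather than degrading to the two-sided bound \eqref{eq:uniforminM}. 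Alternatively, one can rerun the proof of Theorem \ref{thm:sserandom} verbatim with $\lambda_j^{-\rho}$ in place of $m_{\lambda_j}^{-\alpha}$: with $r_j=\lambda_j^{-\rho}$ the relevant Levy concentration exponent is of order $m_{\lambda_j}\lambda_j^{-2n\rho}\gtrsim\lambda_j^{\,n-2-2n\rho}$, a positive power of $\lambda_j$ exactly when $\rho<(n-2)/(2n)$, which overwhelms the $O(\log\lambda_j)$ coming from the cardinality of the net used to make the estimate uniform in $x$; this route avoids the covering step altogether.
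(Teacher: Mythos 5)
Your proposal is correct, and the reduction itself (check (M1)--(M2) for $\T^n$, $n\ge5$, via $m_\lambda\gtrsim\lambda^{n-2}$, then invoke Theorem \ref{thm:sserandom}) is exactly the paper's one-line derivation. Where you genuinely differ is in how you handle the scale mismatch: the theorem is stated at the multiplicity scale $m_{\lambda_j}^{-\alpha}$, while the corollary is at the spectral scale $\lambda_j^{-\rho}$, and since $m_{\lambda_j}$ oscillates these are not the same radii. You resolve this by taking the theorem as a black box at the smaller scale $s_j=m_{\lambda_j}^{-\alpha}$ (with $\rho/(n-2)<\alpha<1/(2n)$) and then transferring to the larger scale $r_j=\lambda_j^{-\rho}$ by the Fubini averaging $\int_{B(x,r_j)}\int_{B(y,s_j)}|u_j|^2$, together with an annulus estimate in which the annulus of width $O(s_j)$ is covered by $O((r_j/s_j)^{n-1})$ balls of radius comparable to $s_j$, each carrying mass $O(s_j^n)$ by the already-established equidistribution; this gives the annulus mass $O(r_j^{n-1}s_j)=o(r_j^n)$ and preserves the sharp constant. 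The paper instead relies (implicitly) on the fact that its proof of Theorem \ref{thm:sserandom} runs verbatim with $r_k=\lambda_k^{-\rho}$: the radius enters only through the requirements $m_k^{-1/2}=o(r_k^n)$ and $t_l=o(r_k^n)$ against the exponential concentration, which hold precisely when $\rho<(n-2)/(2n)$ thanks to $m_k\gtrsim\lambda_k^{n-2}$ --- this is your sketched alternative at the end, and it is the intended reading. Your black-box route buys a cleaner logical dependence on the stated theorem (no need to reopen its proof) at the cost of the extra averaging step; it also quietly exploits that it controls the annulus mass via equidistribution at scale $s_j$ rather than the $L^\infty$ bound \eqref{eq:Linfty} used in the paper's covering step. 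Two small points of care: the phrase ``the $O(\log\lambda_j)$ coming from the cardinality of the net'' should be read as the logarithm of the net cardinality $N\lesssim\lambda_k^{\gamma n}$ entering the exponent (the cardinality itself is polynomial in $\lambda_k$), and in the annulus covering you should allow balls of radius $Cs_j$ and bound their mass by finitely many balls of radius $s_j$; both are cosmetic and do not affect the argument.
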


In Hezari-Rivi\`ere \cite{HR2} and Lester-Rudnick \cite{LR}, the small scale equidistribution was proved for a \textit{full density} subsequence of \textit{any} eigenbasis. (The result in Hezari-Rivi\`ere \cite{HR2} is in fact about uniform comparability as in \eqref{eq:uniforminM}, which is a weaker version of small scale equidistribution.) Keep this difference with Corollary \ref{cor:sserandomtori} in mind. We remark that the equidistribution of toral eigenfunctions on $\T^n$ ($n\ge2$) was proved by Lester-Rudnick \cite{LR} to the scale $r=\lambda^{-\rho}$ for $\rho\in[0,1/(n-1))$, which has a smaller range when $n\ge5$ than the one in Corollary \ref{cor:sserandomtori} for random toral eigenfunctions.

\subsection*{Related results on small scale equidistribution and outline of the proof}
We shall point out the differences of small scale equidistribution of eigenbases on negatively curved manifolds (Theorem \ref{thm:ssencm}) and on manifolds satisfying (M1) and (M2) (Theorem \ref{thm:sserandom}).
In summary,
\begin{itemize}
\item On negatively curved manifolds, Theorem \ref{thm:ssencm}, in particular (ii), asserts a weaker version of small scale equidistribution (i.e. uniform comparability of volume and $L^2$-mass) of a \textit{full density} subsequence of \textit{any} eigenbasis.
\item On manifolds satisfying (M1) and (M2), Theorem \ref{thm:ssencm} asserts the equidistribution of the \textit{whole} random eigenbasis \textit{almost surely}.
\end{itemize}

In addition, the two approaches are different in nature. That is,
\begin{itemize}
\item On negatively curved manifolds, Theorem \ref{thm:ssencm} applies the correspondence of classical dynamics of geodesic flow $G_t$ and quantum dynamics of Schr\"odinger propagator $e^{it\Delta/h}$. ($h$ is the Planck parameter.) In the classical dynamics, the expnential decay of correlations of $G_t$ is essential to control the decay rate of the time-average of a symbol at small scales, see Liverani \cite{Liv}.
\item On manifolds satisfying (M1) and (M2), Theorem \ref{thm:ssencm} relies on the fact that the space of eigenbases is infinite dimensional. The proof applies crucially Levy concentration of measures that a Lipschitz function decays exponentially away from its median value on a large-dimensional sphere.
\end{itemize}

We, however, choose a similar strategy as used in \cite{Han, HR1} to prove Theorem \ref{thm:sserandom}. To be precise, we first show the small scale equidistribution for the random eigenbases at a fixed point almost surely; then we use a covering argument to pass such equidistribution property to all points on the manifold uniformly. The key difference is, due to the exponential concentration of probability measures, we can select a finer covering than the one used in \cite{Han, HR1}; this effectively provides the small scale equidistribution without conceding to uniform comparability as in \eqref{eq:uniforminM}.

\subsection*{Organization}
We gather some standard facts about eigenfunctions and probabilistic estimates in \S\ref{sec:pre}; the proof of Theorem \ref{thm:sserandom} is in \S\ref{sec:proof}.

Throughout this paper, $A\lesssim B$ ($A\gtrsim B$) means $A\le cB$ ($A\ge cB$) for some constant $c$ depending only on the manifold; $A\approx B$ means $A\lesssim B$ and $B\lesssim A$; the constants $c$ and $C$ may vary from line to line.

\section{Preliminaries}\label{sec:pre}
\subsection{The spectral decomposition}\label{sec:spectral}
On a compact manifold $\M$, we denote the spectral decomposition as
$$L^2(\M)=\oplus_{k=0}^\infty E_k,$$
where $E_k$ is the eigenspace of $\Delta$ with eigenvalue $\lambda_k^2$. For notational convenience, denote 
$$m_k:=m_{\lambda_k}=\dim(E_k)$$ 
as the multiplicity of $\lambda_k$. By H\"ormander \cite{Ho}, the Weyl asymptotics of eigenvalues states 
\begin{equation}\label{eq:Weyl}
\#\{\text{eigenvalues (counting multiplicities)}\le\lambda^2\}=c_0\lambda^n+O(\lambda^{n-1}),
\end{equation}
where $c_0$ depends only on $\M$. It follows that $\lambda_k\gtrsim k^{1/n}$. Observe also that
$$\#\{\text{eigenvalues (counting multiplicities)}\le\lambda^2\}=\sum_{l=0}^km_l,$$
where $\lambda_k\le\lambda<\lambda_{k+1}$. So $m_k\le c_1\lambda_k^{n-1}$, where $c_1$ depends only on $\M$. Furthermore, 
$$\sum_{l=1}^km_l\le\sum_{l=1}^kc_1\lambda_l^{n-1}\le c_1k\lambda_k^{n-1},$$
thus $\lambda_k\le c_2k$ by \eqref{eq:Weyl} again. Hence,
\begin{equation}\label{eq:multibd}
k^\frac1n\lesssim\lambda_k\lesssim k\quad\text{and}\quad m_k\lesssim k^{n-1}.
\end{equation}

Write $\{e_{1,k},...,e_{m_k,k}\}$ as an orthonormal basis of eigenfunctions in $E_k$. Let $\s^d_\C\subset\C^{d+1}$ be the complex unit sphere. Then any eigenfunction in $E_k$ can be written as
$$u(x)=\sum_{i=1}^{m_k}u_ie_{i,k}(x),\quad\text{where }u_i\in\C.$$
So the space of $L^2$-normalized functions in $E_k$ can be identified by $\s^{m_k-1}_\C$. Any eigenbasis can be written as
$$\{u_{i,k}\}_{k\in\N,1\le i\le m_k},$$
where $\{u_{1,k},...,u_{m_k,k}\}$ is an orthonormal basis of eigenfunctions in $E_k$. The space of eigenbases in $E_k$ can be identified as 
$$\B_k\cong\Un(m_k).$$
Here, $\Un(m_k)$ is the unitary group on $\C^{m_k}$ endowed with the probability measure as the Haar measure $\nu_k$.

The space of eigenbases $\B$ can then be identified as
$$\B\cong\times_{k=0}^\infty\Un(m_k)\quad\text{endowed with the product probability measure }\nu:=\otimes_{k=0}^\infty\nu_k.$$

\begin{rmk}
One can instead consider the randomization of eigenbases by real coefficients. That is, any $L^2$-normalized eigenfunction in $E_k$ is written as
$$u(x)=\sum_{i=1}^{m_k}u_ie_{i,k}(x),\quad\text{where }u_i\in\R.$$
The space of eigenbases $\tilde\B$ can be identified by
$$\tilde\B\cong\times_{k=0}^\infty\Or(m_k)\quad\text{with product probability measure }\tilde\nu:=\otimes_{k=0}^\infty\tilde\nu_k,$$
where $\tilde\nu_k$ is the Haar measure on the orthogonal group $\Or(m_k)$ on $\R^{m_k}$.

Then the results in Theorem \ref{thm:sserandom} and Corollaries \ref{cor:sserandomspheres} and \ref{cor:sserandomtori} are also valid, replacing $\B$ with probability measure $\nu$ by $\tilde\B$ with probability measure $\tilde\nu$. For simplicity, we only discuss $\B$ with $\nu$.
\end{rmk}

\begin{lemma}\label{lemma:spectralproj}
Assume that $\M$ satisfies (M1). Then
$$\sum_{i=1}^{m_k}|e_{i,k}(x)|^2=\frac{m_k}{\Vol(\M)}\quad\text{for all }x\in\M.$$
\end{lemma}

\begin{proof}
This is just a more general version of the theorem about zonal harmonics on the sphere (see, e.g. Sogge \cite[\S3.4]{So1}). Write the kernel of the orthogonal projection onto the space $E_k$ as
$$K_k(x,y)=\sum_{i=1}^{m_k}e_{i,k}(x)\overline{e_{i,k}(y)}.$$
It is invariant under any isometry $R$ of $\M$ so $K(Rx,Ry)=K(x,y)$; therefore $K(x,x)$ is constant on $\M$ since the isometries act transitively on $\M$. We then derive that
$$\sum_{i=1}^{m_k}|e_{i,k}(x)|^2=K_k(x,x)=\frac{m_k}{\Vol(\M)}$$
because 
$$\int_\M\sum_{i=1}^{m_k}|e_{i,k}(x)|^2=m_k.$$
\end{proof}

For any $u\in E_k$, we also observe that
\begin{equation}\label{eq:Linfty}
\|u\|_{L^\infty(\M)}\le\sqrt{K_k(x,x)}\|u\|_{L^2(\M)}=\left(\frac{m_k}{\Vol(\M)}\right)^\frac12\|u\|_{L^2(\M)}.
\end{equation}
See e.g. Sogge \cite[\S3.4]{So1}.

\subsection{Spheircal harmonics and toral eigenfunctions }\label{sec:ST}
In this subsection, we gather some standard facts about spherical harmonics (i.e. eigenfunctions on the spheres) and toral eigenfunctions on the tori.

On $\s^n$ ($n\ge2$), any spherical harmonic $u$ in $E_k$ satisfies $\Delta u=k(k+n-1)u$ so the eigenfrequency $\lambda_k=\sqrt{k(k+n-1)}\approx k$; moreover, $\dim(E_k)\approx k^{n-1}$, which achieves the maximal growth rate in the view of \eqref{eq:multibd}. See Sogge \cite[\S3.4]{So1} for more details about spherical harmonics.

On $\T^n$ ($n\ge2$), any toral eigenfunction $u$ in $E_k$ satisfies $\Delta u=\lambda_k^2u$, where $\lambda_k^2=l_1^2+\cdots+l_n^2$ for some integers $l_1,...,l_n$. When $n=2,3,4$, (M2) fails; when $n\ge5$, $\dim(E_k)\approx\lambda_k^{n-2}$, see e.g. Grosswell \cite[(9.20)]{G}.

\subsection{Probabilistic estimates}\label{sec:prob}
We now introduce the concentration of measures as the main driving force of our theorems. Let $\s^d\subset\R^{d+1}$ be the $n$-$\dim$ unit sphere endowed with its geodesic distance $\dist(\cdot,\cdot)$ and the uniform probability measure $\mu_d$. A real-valued function $F$ on $\s^d$ is said to be Lipschitz if
$$\|F\|_\Lip:=\sup_{u\ne v}\frac{|F(u)-F(v)|}{\dist(u,v)}<\infty.$$
A number $\Me(F)$ is said to be a median value of $F$ if
$$\mu_d(F\ge\Me(F))\ge\frac12\quad\text{and}\quad\mu_d(F\le\Me(F))\ge\frac12.$$
Levy concentration of measures \cite[Theorem 2.3, (1.10), and (1.12)]{Le} then asserts that a Lipschitz function on $\s^d$ is highly concentrated around its median value when $d$ is large.
\begin{thm}[Levy concentration of measures]\label{thm:Levy}
Consider a Lipschitz function $F$ on $\s^d$. Then for any $t>0$, we have
$$\mu_d(|F-\Me(F)|>t)\le\exp\left(-\frac{(d-1)t^2}{2\|F\|_\Lip^2}\right).$$
\end{thm}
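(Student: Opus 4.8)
The final statement to prove is Theorem~\ref{thm:Levy} (Levy concentration of measures), which in the excerpt is attributed to a reference \cite{Le}; so rather than reprove it from scratch I will sketch the standard route to this inequality, which is the one that would appear if the author wished to give a self-contained argument.

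\medskip

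\textbf{Overall strategy.} The plan is to reduce the deviation inequality for a Lipschitz function on $\s^d$ to the \emph{isoperimetric inequality on the sphere}: among all Borel sets $A\subset\s^d$ of a given measure, geodesic caps minimize the measure of their $t$-neighbourhood $A_t=\{u:\dist(u,A)\le t\}$. Granting this, the argument proceeds in three steps. First, I would apply the isoperimetric inequality to the sublevel set $A=\{F\le\Me(F)\}$, which has $\mu_d(A)\ge 1/2$; by the isoperimetric comparison, $\mu_d(A_t)$ is at least the measure of the $t$-neighbourhood of a half-sphere (a cap of measure $1/2$). Second, I would observe that by the Lipschitz property, $u\in A_t$ forces $F(u)\le \Me(F)+\|F\|_{\Lip}\,t$, so $\mu_d(F>\Me(F)+\|F\|_{\Lip}t)\le 1-\mu_d(A_t)$, which is at most the measure of the complement of the $t$-neighbourhood of a half-sphere, i.e.\ the measure of a cap of geodesic radius $\pi/2-t$. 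Third, I would estimate that cap measure: the normalized measure of the cap $\{u_{d+1}\ge s\}$ on $\s^d$ is
\[
\frac{\int_s^1 (1-x^2)^{(d-2)/2}\,dx}{\int_{-1}^1 (1-x^2)^{(d-2)/2}\,dx},
\]
and a direct bound (using $(1-x^2)^{(d-2)/2}\le e^{-(d-2)x^2/2}$ in the numerator and a crude lower bound on the denominator) gives a Gaussian tail of the form $\exp(-(d-1)t^2/2)$ after putting $s=\sin t$. Running the same argument with $F$ replaced by $-F$ controls the lower tail $\mu_d(F<\Me(F)-\|F\|_{\Lip}t)$, and combining the two and rescaling $t\mapsto t/\|F\|_{\Lip}$ yields the stated bound $\mu_d(|F-\Me(F)|>t)\le\exp(-(d-1)t^2/(2\|F\|_{\Lip}^2))$.

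\medskip

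\textbf{Key steps, in order.}
\emph{(1)} State/invoke the spherical isoperimetric inequality (L\'evy--Schmidt); if a self-contained proof were wanted, one would use the two-point symmetrization / spherical rearrangement argument, but I would treat this as a black box.
\emph{(2)} Reduce the one-sided deviation $\{F>\Me(F)+t\}$ to the complement of a neighbourhood of a cap of measure $\ge1/2$, using only the definition of median and the Lipschitz bound.
\emph{(3)} Carry out the elementary integral estimate for the measure of a spherical cap, which is where the explicit constant and the Gaussian shape $e^{-(d-1)t^2/2}$ emerge.
\emph{(4)} Symmetrize to get the two-sided bound and normalize by $\|F\|_{\Lip}$.

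\medskip

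\textbf{Main obstacle.} The genuine mathematical content — and the step I expect to be hardest to make fully rigorous — is the spherical isoperimetric inequality itself; everything after it is bookkeeping. If one does not wish to import it, the cleanest self-contained alternative is to avoid isoperimetry entirely and instead prove a concentration inequality via a \emph{logarithmic Sobolev inequality} on $\s^d$ (with the sharp constant $1/(d-1)$) together with the Herbst argument: bound $\mathbb{E}[e^{\lambda F}]$ through a differential inequality for $\varphi(\lambda)=\log\mathbb{E}[e^{\lambda F}]$, obtaining $\mathbb{E}[e^{\lambda(F-\mathbb{E}F)}]\le e^{\lambda^2\|F\|_{\Lip}^2/(2(d-1))}$, and then convert from mean to median (at the cost of absorbing constants). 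The trade-off is that this replaces the isoperimetric black box with the sharp-constant log-Sobolev inequality on the sphere, which is itself nontrivial. Either way, the care needed is in tracking the dimension-dependent constant so that it comes out as $d-1$ rather than a lossy $cd$; the probabilistic reduction from the eigenbasis problem is not the issue here, since for Theorem~\ref{thm:Levy} the statement is purely about $\s^d$.
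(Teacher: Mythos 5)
The paper does not prove this statement at all: it is quoted verbatim (as Theorem~\ref{thm:Levy}) from the cited reference \cite[Theorem 2.3, (1.10), (1.12)]{Le}, and your sketch --- spherical isoperimetry applied to the sublevel set of the median, a cap-measure tail estimate, then symmetrization and rescaling by $\|F\|_\Lip$ --- is exactly the standard argument underlying that citation, so your proposal is correct and takes essentially the same (black-boxed) route as the paper. The only point worth flagging is the bookkeeping you already anticipate: to recover the stated two-sided bound without an extra factor of $2$ one needs the one-sided cap estimate with prefactor $\tfrac12$ (i.e.\ a cap of geodesic radius $\pi/2-t$ has measure at most $\tfrac12 e^{-(d-1)t^2/2}$), after which the union bound over the two tails gives precisely the displayed inequality.
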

We also need the probability distribution of a random eigenfunction in $E_k$. Let $\{e_{1,k},...,e_{m_k,k}\}$ be an orthonormal basis of $E_k$. Recalling the identification of the $L^2$-normalized functions in $E_k$ by $\s^{m_k-1}_\C$, we write
$$u(x)=\sum_{i=1}^{m_k}u_ie_{i,k}(x)=\langle(u_1,...,u_{m_k}),\overline{(e_{1,k}(x),...,e_{m_k,k}(x))}\rangle_{\C^{m_k}},$$
where $(u_1,...,u_{m_k})\in\s_\C^{m_k-1}$ and $(e_{1,k}(x),...,e_{m_k}(x))\in\C^{m_k}$ with length
$$|(e_{1,k}(x),...,e_{m_k,k}(x))|=\left(\frac{m_k}{\Vol(\M)}\right)^\frac12$$ 
independent of $x\in\M$ by Lemma \ref{lemma:spectralproj}. Thus, for $t\in[0,(m_k/\Vol(\M))^{1/2})$,
$$|u(x)|>t\quad\text{if and only if}\quad|\langle(u_1,...,u_{m_k}),\overline{(e_{1,k}(x),...,e_{m_k,k}(x))}\rangle_{\C^{m_k}}|>t.$$
We can identify $\s^{m_k-1}_\C$ with probability measure $P_k$ by $\s^{2m_k-1}$ with probability measure $\mu_{2m_k-1}$. We therefore have the following fact.
\begin{lemma}\label{lemma:nukt}
$$P_k(|u(x)|>t)=\begin{cases}
\left(1-\frac{\Vol(\M)t^2}{m_k}\right)^{m_k-1} & \text{if }0\le t<\left(\frac{m_k}{\Vol(\M)}\right)^\frac12,\\
0 & \text{if }t\ge\left(\frac{m_k}{\Vol(\M)}\right)^\frac12.
\end{cases}$$
for all $x\in\M$.
\end{lemma}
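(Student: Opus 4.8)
The plan is to exploit the unitary invariance of the measure $P_k$ on $\s_\C^{m_k-1}$ to reduce the computation to the distribution of a single complex coordinate, and then to evaluate that distribution explicitly. Fix $x\in\M$ and set $w=\overline{(e_{1,k}(x),\dots,e_{m_k,k}(x))}\in\C^{m_k}$, so that $u(x)=\langle(u_1,\dots,u_{m_k}),w\rangle_{\C^{m_k}}$ and, by Lemma \ref{lemma:spectralproj}, $|w|=(m_k/\Vol(\M))^{1/2}$. Since $P_k$ is the image of the Haar measure under the identification of the $L^2$-normalized functions in $E_k$ with $\s_\C^{m_k-1}$, it is invariant under the $\Un(m_k)$-action; choosing a unitary map sending $w/|w|$ to the first standard basis vector shows that $|u(x)|$ has the same $P_k$-law as $|w|\,|u_1|$, where now $(u_1,\dots,u_{m_k})$ is uniform on $\s_\C^{m_k-1}$ and $u_1$ is its first complex coordinate. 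Hence $P_k(|u(x)|>t)=P_k(|u_1|^2>t^2\Vol(\M)/m_k)$, and when $t\ge|w|=(m_k/\Vol(\M))^{1/2}$ the event is empty because $|u_1|\le1$, which gives the second case.

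It then remains to identify the law of $s:=|u_1|^2$ for $(u_1,\dots,u_{m_k})$ uniform on $\s_\C^{m_k-1}\cong\s^{2m_k-1}$. The cleanest route is the Gaussian representation: if $Z_1,\dots,Z_{m_k}$ are i.i.d.\ standard complex Gaussians, then $(Z_1,\dots,Z_{m_k})/|Z|$ is uniform on $\s_\C^{m_k-1}$, so $s$ has the law of $|Z_1|^2/\sum_{i=1}^{m_k}|Z_i|^2$. Writing $A=|Z_1|^2$ (an exponential variable) and $B=\sum_{i=2}^{m_k}|Z_i|^2$ (a $\Gamma(m_k-1)$ variable, independent of $A$), one has $\{s>\tau\}=\{A>\tfrac{\tau}{1-\tau}B\}$; conditioning on $B$ and integrating the exponential tail against the Gamma density yields $P_k(s>\tau)=(1-\tau)^{m_k-1}$ for $\tau\in[0,1)$. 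Equivalently, this is the standard fact that the sum of the squares of the first two coordinates of a uniform point on $\s^{2m_k-1}$ is $\mathrm{Beta}(1,m_k-1)$-distributed; or one can argue directly by slicing $\s^{2m_k-1}$ into products $\s^1_{\sqrt\tau}\times\s^{2m_k-3}_{\sqrt{1-\tau}}$ and applying the coarea formula. Substituting $\tau=t^2\Vol(\M)/m_k$ gives the first case.

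There is no genuine obstacle here; the lemma is a direct consequence of the rotational symmetry of the sphere. The only care needed is bookkeeping of normalizations: the radius $|w|=(m_k/\Vol(\M))^{1/2}$ supplied by Lemma \ref{lemma:spectralproj}, the passage $\s_\C^{m_k-1}\leftrightarrow\s^{2m_k-1}$ (so that a single \emph{complex} coordinate corresponds to two real coordinates and the relevant Beta parameter is $m_k-1$, not $m_k$), and the identity $1+\tfrac{\tau}{1-\tau}=\tfrac{1}{1-\tau}$ arising in the Gamma integral.
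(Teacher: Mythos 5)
Your proposal is correct, and it in fact supplies more than the paper does: the paper only records the reduction (writing $u(x)$ as a Hermitian pairing with the vector $\overline{(e_{1,k}(x),\dots,e_{m_k,k}(x))}$, whose length $(m_k/\Vol(\M))^{1/2}$ is constant in $x$ by Lemma \ref{lemma:spectralproj}, and identifying $\s_\C^{m_k-1}$ with $\s^{2m_k-1}$) and then cites \cite[\S A.1]{BuLe1} for the elementary distributional computation. Your argument fills in exactly that missing computation: the unitary-invariance step correctly reduces the law of $|u(x)|$ to $|w|\,|u_1|$ with $(u_1,\dots,u_{m_k})$ uniform on $\s_\C^{m_k-1}$, and the Gaussian representation (exponential against a $\Gamma(m_k-1)$ tail), equivalently the fact that $|u_1|^2\sim\mathrm{Beta}(1,m_k-1)$, gives $P_k(|u_1|^2>\tau)=(1-\tau)^{m_k-1}$ for $\tau\in[0,1)$; substituting $\tau=t^2\Vol(\M)/m_k$ and noting $|u(x)|\le|w|$ handles both cases of the lemma. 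The bookkeeping you flag is the only place one could slip (one complex coordinate equals two real ones, hence the exponent $m_k-1$ rather than $m_k$ or $2m_k-1$), and you have it right; this is essentially the same elementary computation as in Burq--Lebeau, so there is no substantive divergence from the intended proof.
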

See e.g. \cite[\S A.1]{BuLe1} for an elementary proof.

\section{Proof of Theorem \ref{thm:sserandom}}\label{sec:proof}
In this section, we prove Theorem \ref{thm:sserandom}. We first establish \eqref{eq:sserandom} at a fixed point on the manifold; then we use a covering argument to complete the proof for all points uniformly.
\subsection{Small scale equidistribution at a fixed point}
Fix a point $x_0\in\M$. The small scale equidistribution is a consequence of Lemma \ref{lemma:nukt} and Theorem \ref{thm:Levy}; the proof in fact follows further analysis of the case $q=2$ in Burq-Lebeau \cite[\S3]{BuLe1}. 

Let $\{e_{1,k},...,e_{m_k,k}\}$ be an orthonormal basis of eigenfunctions in $E_k$. Let $r_k\ge0$ and define 
$$F_{x_0,k}(u)=\int_{B(x_0,r_k)}|u(x)|^2\,dx,$$
in which 
$$u(x)=\sum_{i=1}^{m_k}u_ie_{i,k}(x),\quad\text{where }(u_1,...,u_{m_k})\in\s^{m_k-1}_\C.$$
We know that for $q\ge0$ and a measurable function $F$ on $\s_\C^{m_k-1}$,
$$\int_{\s_\C^{m_k-1}}|F(u)|^q\,dP_k=q\int_0^\infty t^{q-1}P_k(|F(u)|>t)\,dt.$$
By Lemma \ref{lemma:nukt}, we compute the average value of $F_{x_0,k}$ as
\begin{eqnarray}
\Av(F_{x_0,k})&=&\int_{\s_\C^{m_k-1}}\int_{B(x_0,r_k)}|u(x)|^2\,dxdP_k\nonumber\\
&=&\int_{B(x_0,r_k)}\int_{\s_\C^{m_k-1}}|u(x)|^2\,dP_kdx\nonumber\\
&=&\int_{B(x_0,r_k)}2\int_0^\infty tP_k(|u(x)|>t)\,dtdx\nonumber\\
&=&\int_{B(x_0,r_k)}2\int_0^{(m_k/\Vol(\M))^{1/2}}t\left(1-\frac{\Vol(\M)t^2}{m_k}\right)^{m_k-1}\,dtdx\nonumber\\
&=&\frac{m_k}{\Vol(\M)}\int_{B(x_0,r_k)}2\int_0^1t(1-t^2)^{m_k-1}\,dtdx\nonumber\\
&=&\frac{\Vol(B(x_0,r_k))}{\Vol(\M)}.\label{eq:AvFx0}
\end{eqnarray}
This just verifies that the average value of the $L^2$-mass of a random eigenfunction $u$ in any region equals the (normalized) volume of the region, since the probability distribution of $|u(x)|$ is independent of $x\in\M$. Here, we only require that $r_k\ge0$.

To compute the median value of $F_{x_0,k}$, we evaluate its Lipschitz norm. For $u,v\in\s_\C^{m_k-1}$,
\begin{eqnarray*}
|F_{x_0,k}(u)-F_{x_0,k}(v)|&\le&\int_{B(x_0,r_k)}\big||u(x)|^2-|v(x)|^2\big|\,dx\\
&=&\int_{B(x_0,r_k)}\big|(|u(x)|-|v(x)|)(|u(x)|+|v(x)|)\big|\,dx\\
&\le&\left(\int_{B(x_0,r_k)}|u(x)-v(x)|^2\,dx\right)^\frac12\left(\int_{B(x_0,r_k)}(|u(x)|+|v(x)|)^2\,dx\right)^\frac12\\
&\le&\left\|\sum_{i=1}^{m_k}(u_i-v_i)e_{i,k}(x)\right\|_{L^2(\M)}\left(\int_{B(x_0,r_k)}(|u(x)|+|v(x)|)^2\,dx\right)^\frac12\\
&\le&c\,\dist(u,v).
\end{eqnarray*}
Therefore, $\|F_{x_0,k}\|_\Lip\le c$. By Theorem \ref{thm:Levy}, we have
\begin{eqnarray*}
|\Av(F_{x_0,k})-\Me(F_{x_0,k})|&=&\left|\|F_{x_0,k}\|_{L^1(\s_\C^{m_k-1})}-\|\Me(F_{x_0,k})\|_{L^1(\s_\C^{m_k-1})}\right|\\
&\le&\|F_{x_0,k}-\Me(F_{x_0,k})\|_{L^1(\s_\C^{m_k-1})}\\
&=&\int_0^\infty P_k(|F_{x_0,k}(u)-\Me(F_{x_0,k})|>t)\,dt\\
&\le&\int_0^\infty\exp\left(-\frac{(m_k-1)t^2}{\|F_{x_0,k}\|_\Lip^2}\right)\,dt\\
&\le&cm_k^{-\frac12}.
\end{eqnarray*}
Therefore, when
$$r_k=m_k^{-\alpha}\quad\text{for }0\le\alpha<\frac{1}{2n},$$
we have
$$|\Av(F_{x_0,k})-\Me(F_{x_0,k})|=O(m_k^{-1/2})=o(r_k^n).$$
Hence, seeing \eqref{eq:AvFx0},
\begin{equation}\label{eq:MeFx0}
\Me(F_{x_0,k})=\frac{\Vol(B(x_0,r_k))}{\Vol(\M)}+o(r_k^n).
\end{equation}
In the space of eigenbases $\B_k$ in the eigenspace $E_k$, by Theorem \ref{thm:Levy} again,
\begin{eqnarray*}
&&\nu_k\left(\{u_{i,k}\}_{i=1}^{m_k}\in\B_k:\exists1\le i\le m_k,|F_{x_0,k}(u_{i,k})-\Me(F_{x_0,k})|>t\right)\\
&\le&\sum_{i=1}^{m_k}\nu_k\left(\{u_{i,k}\}_{i=1}^{m_k}\subset E_k:|F_{x_0,k}(u_{i,k})-\Me(F_{x_0,k})|>t\right)\\
&\le&e^{-cm_kt^2}m_k.
\end{eqnarray*}
Here, we use the fact that the map $\{u_{1,k},...,u_{m_k,k}\}\to u_{i,k}$ for each $i=1,...,m_k$ sends the probability measure $\nu_k$ on $\Un(m_k)$ to $P_k$ on $\s_\C^{m_k-1}$. Then 
\begin{eqnarray*}
&&\nu\left(\{u_{i,k}\}_{k\in\N,1\le i\le m_k}\in\B:\exists k\in\N,\exists1\le i\le m_k,|F_{x_0,k}(u_{i,k})-\Me(F_{x_0,k})|>t\right)\\
&\le&\sum_{k=0}^\infty\nu_k\left(\{u_{i,k}\}_{i=1}^{m_k}\in\B_k:\exists1\le i\le m_k,|F_{x_0,k}(u_{i,k})-\Me(F_{x_0,k})|>t\right)\\
&\le&c\sum_{k=1}^\infty e^{-cm_kt^2}m_k.
\end{eqnarray*}
Here, we use the fact that $E_0$ consists of constant functions which are equidistributed at any scale.

Because $\alpha\in[0,1/(2n))$, we can find $\beta\in(\alpha n,1/2)$. Let $t_l=m_k^{-\beta}l$. Since $m_k\lesssim\lambda_k^{n-1}$ by \eqref{eq:multibd}, we compute
\begin{eqnarray*}
&&\nu\left(\{u_{i,k}\}_{k\in\N,1\le i\le m_k}\in\B:\exists k\ge1,\exists1\le i\le m_k,|F_{x_0,k}(u_{i,k})-\Me(F_{x_0,k})|>t_l\right)\\
&\le&C\sum_{k=1}^\infty\exp\left(-cm_kt_l^2\right)\lambda_k^{n-1}\\
&\le&C\sum_{k=1}^\infty\exp\left(-c(1-2\beta)l^2L_\M\log\lambda_k+(n-1)\log\lambda_k\right)
\end{eqnarray*}
by Condition (M2) that $m_k\gtrsim L_\M\log\lambda_k$. Using the fact that $k^{1/n}\lesssim\lambda_k\lesssim k$ in \eqref{eq:multibd}, 
\begin{eqnarray}
&&\nu\left(\{u_{i,k}\}_{k\in\N,1\le i\le m_k}\in\B:\exists k\ge2,\exists1\le i\le m_k,|F_{x_0,k}(u_{i,k})-\Me(F_{x_0,k})|>t_l\right)\nonumber\\
&\le&C\sum_{k=2}^\infty\exp\left(-c(1-2\beta)l^2L_M\log k+c'\log k\right)\nonumber\label{eq:nux0}\\
&\le&C\sum_{k=2}^\infty\exp\left(-cl^2\log k\right)\\
&\to&0\quad\text{as }l\to\infty\nonumber.
\end{eqnarray}
This implies that
$$\nu\left(\{u_{i,k}\}_{k\in\N,1\le i\le m_k}\in\B:\forall l\in\N,\exists k\ge2,\exists1\le i\le m_k,|F_{x_0,k}(u_{i,k})-\Me(F_{x_0,k})|>t_l\right)=0;$$
hence,
$$\nu\left(\{u_{i,k}\}_{k\in\N,1\le i\le m_k}\in\B:\exists l\in\N,\forall k\ge2,\forall1\le i\le m_k,|F_{x_0,k}(u_{i,k})-\Me(F_{x_0,k})|\le t_l\right)=1.$$
But if an eigenbasis $\{u_{i,k}\}_{k\in\N,1\le i\le m_k}\in\B$ satisfies that for some $l\in\N$,
$$|F_{x_0,k}(u_{i,k})-\Me(F_{x_0,k})|\le t_l\quad\text{for all }k\ge2,1\le i\le m_k,$$
then
$$\int_{B(x_0,r_k)}|u_{i,k}(x)|^2\,dx=\Me(F_{x_0,k})+o(r_k^n)=\frac{\Vol(B(x_0,r_k))}{\Vol(\M)}+o(r_k^n)$$
by \eqref{eq:MeFx0} and $t_l=lm_k^{-\beta}=o(m_k^{-\alpha n})=o(r_k^n)$ since $r_k=m_k^{-\alpha}$. This concludes that \eqref{eq:sserandom} is almost surely true at a fixed point $x_0\in\M$.

\subsection{Small scale equidistribution on the manifold}
To prove the small scale equidistribution uniformly for all points on the manifold, we need a covering lemma.
\begin{lemma}\label{lemma:covering}
Let $s>0$ be sufficiently small. Then there exists a family of geodesic balls that covers $\M$:
$$\bigcup_{p=1}^NB(x_p,s)\supset\M\quad\text{with }N\le c_1s^{-n},$$ 
where $c_1>0$ depends only on $\M$.
\end{lemma}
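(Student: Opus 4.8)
The plan is to prove the covering lemma (Lemma \ref{lemma:covering}) by the standard maximal-packing argument, which simultaneously produces a cover and controls its cardinality via a volume comparison. First I would fix $s>0$ smaller than, say, the injectivity radius of $\M$ (which is assumed to be greater than $1$, so this costs nothing), so that geodesic balls of radius $s$ and $2s$ behave like Euclidean balls in normal coordinates. Then I would choose a maximal $s$-separated set $\{x_1,\dots,x_N\}\subset\M$, i.e. a maximal collection of points with $\dist(x_p,x_q)\ge s$ for $p\ne q$; such a maximal set exists by Zorn's lemma (or, since $\M$ is compact, by a finite greedy procedure), and it is necessarily finite by the cardinality bound below.

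The two things to check are coverage and the count. For coverage: if some $x\in\M$ were not in $\bigcup_p B(x_p,s)$, then $\dist(x,x_p)\ge s$ for every $p$, so $\{x_1,\dots,x_N,x\}$ would still be $s$-separated, contradicting maximality; hence $\bigcup_{p=1}^N B(x_p,s)\supset\M$. For the count: the balls $B(x_p,s/2)$ are pairwise disjoint (any point in two of them would force $\dist(x_p,x_q)<s$), so
$$\sum_{p=1}^N \Vol\big(B(x_p,s/2)\big)=\Vol\left(\bigsqcup_{p=1}^N B(x_p,s/2)\right)\le\Vol(\M).$$
By compactness of $\M$ and smoothness of the metric, for $s$ sufficiently small one has a uniform lower bound $\Vol(B(x,s/2))\ge c\,s^n$ for all $x\in\M$ with $c>0$ depending only on $\M$ (this is the usual consequence of the volume comparison $\Vol(B(x,r))=\omega_n r^n(1+O(r^2))$ in geodesic normal coordinates, with the $O(r^2)$ term uniform over the compact $\M$). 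Combining, $N c s^n\le\Vol(\M)$, i.e. $N\le c_1 s^{-n}$ with $c_1=\Vol(\M)/c$ depending only on $\M$.

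The only mildly delicate point — and the one I would state carefully rather than grind through — is the uniform two-sided volume comparison $\Vol(B(x,r))\approx r^n$ for small $r$, uniform in $x\in\M$; this follows from the Gauss lemma and continuity of the metric (equivalently, Bishop--Gromov together with a lower Ricci bound, or just compactness of the unit tangent bundle), and the hypothesis that the injectivity radius exceeds $1$ guarantees there is a single threshold $s_0$, depending only on $\M$, below which everything works. Given that, the proof is essentially the three displayed lines above, so I do not expect any real obstacle; the ``sufficiently small $s$'' in the statement is exactly there to absorb this threshold.
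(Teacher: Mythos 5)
Your proposal is correct and follows essentially the same route as the paper, which proves the lemma in one line by taking a maximal family of disjoint balls of radius $s/2$ (equivalently, your maximal $s$-separated set), getting coverage from maximality and the bound $N\le c_1 s^{-n}$ from disjointness plus the uniform volume comparison $\Vol(B(x,s/2))\gtrsim s^n$ on the compact manifold $\M$. No gaps; the details you spell out (uniform small-radius volume estimate, injectivity radius threshold) are exactly the standard facts the paper leaves implicit.
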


The covering lemma follows by choosing $\{B(x_p,s/2)\}_{p=1}^N$ as a maximal family of disjoint balls with radius $s/2$.

Let $\gamma>0$ be chosen later. Then Lemma \ref{lemma:covering} implies that there exists a covering $\{B(x_p,s)\}_{p=1}^N$ with
\begin{equation}\label{eq:gamma}
s=\lambda_k^{-\gamma}:=s_k\quad\text{and}\quad N\lesssim \lambda_k^{\gamma n}.
\end{equation}  
Define for $p=1,...,N$ that
$$F_{x_p,k}(u)=\int_{B(x_p,r_k)}|u(x)|^2\,dx,\quad\text{where }r_k=m_k^{-\alpha}.$$

\begin{rmk}
Notice that $\{B(x_p,r_k)\}_{p=1}^N$ is also a covering of $\M$ if $s_k\le r_k$. In fact, we shall choose $\gamma$ large enough such that $s_k=\lambda_k^{-\gamma}\ll r_k$. It is irrelevant for our purpose that the overlapping in the new covering $\{B(x_p,r_k)\}_{p=1}^N$ is not uniformly bounded. 

It is however crucial that for any $x\in\M$, there is $x_p$ such that the distance of $x$ and $x_p$ is less than $s_k\ll r_k$; so we can approximate $\Vol(B(x,r_k))$ by $\Vol(B(x_p,r_k))$ and $\int_{B(x,r_k)}|u(x)|^2\,dx$ by $\int_{B(x_p,r_k)}|u(x)|^2\,dx$ better than the corresponding step of the argument in \cite{Han, HR1}, therefore achieving uniform small scale equidistribution rather than uniform comparability of the volume and $L^2$-mass in \eqref{eq:uniforminM}.
\end{rmk}

Repeat the process in the previous subsection. We can prove that 
\begin{equation}\label{eq:MeFxp}
\Me(F_{x_p,k})=\frac{\Vol(B(x_p,r_k))}{\Vol(\M)}+o(r_k^n)\quad\text{for all }p=1,...,N;
\end{equation}
moreover, similar to \eqref{eq:nux0}, for some $\beta\in(\alpha n,1/2)$,
\begin{eqnarray*}
&&\nu\left(\{u_{i,k}\}_{k\in\N,1\le i\le m_k}\in\B:\exists k\ge2,\exists1\le i\le m_k,|F_{x_p,k}(u_{i,k})-\Me(F_{x_p,k})|>t_l\right)\\
&\le&C\sum_{k=2}^\infty\exp\left(-cl^2\log k\right).
\end{eqnarray*}
Recalling that $t_l=m_k^{-\beta}l$, by \eqref{eq:multibd},
\begin{eqnarray*}
&&\nu\left(\{u_{i,k}\}_{k\in\N,1\le i\le m_k}\in\B:\exists1\le p\le N,\exists k\ge2,\exists1\le i\le m_k,|F_{x_p,k}(u_{i,k})-\Me(F_{x_p,k})|>t_l\right)\\
&\le&C\sum_{k=2}^\infty\exp\left(-cl^2\log k\right)N\\
&\le&C\sum_{k=2}^\infty\exp\left(-cl^2\log k\right)\lambda_k^{\gamma n}\\
&\le&C\sum_{k=2}^\infty\exp\left(-cl^2\log k+\gamma n\log\lambda_k\right)\\
&\le&C\sum_{k=2}^\infty\exp\left(-cl^2\log k+c'\log k\right)\\
&\to&0\quad\text{as }l\to\infty.
\end{eqnarray*}
This implies that
\begin{eqnarray*}
&&\nu\left(\{u_{i,k}\}_{k\in\N,1\le i\le m_k}\in\B:\forall l\in\N,\exists1\le p\le N,\exists k\ge2,\exists1\le i\le m_k,|F_{x_p,k}(u_{i,k})-\Me(F_{x_p,k})|>t_l\right)\\
&=&0;
\end{eqnarray*}
hence,
\begin{eqnarray*}
&&\nu\left(\{u_{i,k}\}_{k\in\N,1\le i\le m_k}\in\B:\exists l\in\N,\forall1\le p\le N,\forall k\ge2,\forall1\le i\le m_k,|F_{x_p,k}(u_{i,k})-\Me(F_{x_p,k})|\le t_l\right)\\
&=&1.
\end{eqnarray*}
But if an eigenbasis $\{u_{i,k}\}_{k\in\N,1\le i\le m_k}\in\B$ satisfies that for some $l\in\N$,
$$|F_{x_p,k}(u_{i,k})-\Me(F_{x_p,k})|\le t_l\quad\text{for all }1\le p\le N,k\ge2,1\le i\le m_k,$$
then
\begin{equation}\label{eq:ssmrandomatxp}
\int_{B(x_p,r_k)}|u_{i,k}(x)|^2\,dx=\Me(F_{x_p,k})+o(r_k^n)=\frac{\Vol(B(x_p,r_k))}{\Vol(\M)}+o(r_k^n).
\end{equation}
by \eqref{eq:MeFxp} and $t_l=o(r_k^n)$. Note also that the above convergence is uniform for $x_p$, $p=1,...,N$, because $l$ is independent of $x_p$. Therefore, \eqref{eq:sserandom} is valid when $x_0$ is replaced by any $x_p$, $p=1,...,N$. 

Next we show that by choosing $\gamma>0$ in \eqref{eq:gamma} large enough (depending only on $\M$), \eqref{eq:ssmrandomatxp} guarantees that
$$\int_{B(z,r_k)}|u_{i,k}(x)|^2\,dx=\frac{\Vol(B(z,r_k))}{\Vol(\M)}+o(r_k^n)\quad\text{uniformly for all }z\in\M,$$
hence finishes the proof of Theorem \ref{thm:sserandom}.

Indeed, let $z\in\M$. Since $\{B(x_p,s_k)\}_{p=1}^N$ is a covering of $\M$, there exists $1\le p\le N$ such that $z\in B(x_p,s_k)$. Observe that from \eqref{eq:multibd},
$$s_k=\lambda_k^{-\gamma}\le cm_k^{-\frac{\gamma}{n-1}}=cr_k^{\frac{\gamma}{\alpha(n-1)}}<r_k,\quad\text{given that }\gamma>\frac{n-1}{2n}>\alpha(n-1).$$ 
Then we immediately derive that
\begin{eqnarray}
&&\left|\frac{\Vol(B(z,r_k))}{\Vol(\M)}-\frac{\Vol(B(x_p,r_k))}{\Vol(\M)}\right|\nonumber\\
&\le&c\Vol\left(B(x_p,r_k+s_k)\setminus B(x_p,r_k-s_k)\right)\nonumber\\
&\le&cs_kr_k^{n-1}\nonumber\\
&\le&cr_k^{\frac{\gamma}{\alpha(n-1)}}r_k^{n-1}\nonumber\\
&=&o(r_k^n).\label{eq:volumedif}
\end{eqnarray}
Using the $L^\infty$ estimate of $u_{i,k}$ in \eqref{eq:Linfty} that 
$$\|u_{i,k}\|_{L^\infty(\M)}\le cm_k^\frac12=cr_k^{-\frac{1}{2\alpha}},$$ 
we also have
\begin{eqnarray}
&&\left|\int_{B(z,r_k)}|u_{i,k}(x)|^2\,dx-\int_{B(x_p,r_k)}|u_{i,k}(x)|^2\,dx\right|\nonumber\\
&\le&\int_{B(z,r_k)\cup B(x_p,r_k)}|u_{i,k}(x)|^2\,dx-\int_{B(z,r_k)\cap B(x_p,r_k)}|u_{i,k}(x)|^2\,dx\nonumber\\
&=&\int_{(B(z,r_k)\cup B(x_p,r_k))\setminus(B(z,r_k)\cap B(x_p,r_k))}|u_{i,k}(x)|^2\,dx\nonumber\\
&\le&\int_{B(x_p,r_k+s_k)\setminus B(x_p,r_k-s_k)}|u_{i,k}(x)|^2\,dx\nonumber\\
&\le&cr_k^{-\frac1\alpha}\Vol\left(B(x_p,r_k+s_k)\setminus B(x_p,r_k-s_k)\right)\nonumber\\
&\le&cr_k^{-\frac1\alpha}s_kr_k^{n-1}\nonumber\\
&\le&cr_k^{-\frac1\alpha}r_k^{\frac{\gamma}{\alpha(n-1)}}r_k^{n-1}\nonumber\\
&=&o(r_k^n).\label{eq:integraldif}
\end{eqnarray}
given that $\gamma>2(n-1)>(1+\alpha)(n-1)$.

Combining \eqref{eq:ssmrandomatxp}, \eqref{eq:volumedif}, and \eqref{eq:integraldif}, we see that
\begin{eqnarray*}
&&\left|\int_{B(z,r_k)}|u_{i,k}(x)|^2\,dx-\frac{\Vol(B(z,r_k))}{\Vol(\M)}\right|\\
&\le&\left|\frac{\Vol(B(z,r_k))}{\Vol(\M)}-\frac{\Vol(B(x_p,r_k))}{\Vol(\M)}\right|+\left|\int_{B(z,r_k)}|u_{i,k}(x)|^2\,dx-\int_{B(x_p,r_k)}|u_{i,k}(x)|^2\,dx\right|\\
&&+\left|\int_{B(x_p,r_k)}|u_{i,k}(x)|^2\,dx-\frac{\Vol(B(x_p,r_k))}{\Vol(\M)}\right|\\
&=&o(r_k^n)\quad\text{uniformly for all }z\in\M.
\end{eqnarray*}
This completes the proof of Theorem \ref{thm:sserandom}.

\section*{Acknowledgements}
It is a pleasure to thank Melissa Tacy for all our discussions that are related to randomization and its applications to harmonic analysis. I also want to thank Andrew Hassell, Steve Lester, Gabriel Rivi\`ere, and Ze\'ev Rudnick for reading the manuscript and offering suggestions that helped to improve the presentation.


\begin{thebibliography}{99}

\bibitem[BrLi]{BrLi} S. Brooks and E. Lindenstrauss, 
\textit{Joint quasimodes, positive entropy, and quantum unique ergodicity}. Invent. Math. 198 (2014), no. 1, 219--259.

\bibitem[BuLe1]{BuLe1} N. Burq and G. Lebeau,
\textit{Injections de Sobolev probabilistes et applications}. Ann. Sci. \'Ec. Norm. Sup\'er. (4) 46 (2013), no. 6, 917--962. 

\bibitem[BuLe2]{BuLe2} N. Burq and G. Lebeau,
\textit{Probabilistic Sobolev embeddings, applications to eigenfunctions estimates}. Geometric and spectral analysis, 307--318, Contemp. Math., 630, American Mathematical Society, Providence, RI, 2014.

\bibitem[CdV]{CdV} Y. Colin de Verdi\`ere,
\textit{Ergodicit\'e et fonctions propres du laplacien}. Comm. Math. Phys. 102 (1985), no. 3, 497--502.

\bibitem[G]{G} E. Grosswald,
\textit{Representations of integers as sums of squares}. Springer-Verlag, New York, 1985.

\bibitem[Han]{Han} X. Han,
\textit{Small scale quantum ergodicity in negatively curved manifolds}. Nonlinearity 28 (2015), no. 9, 3263--3288.

\bibitem[Has]{Has} A. Hassell,
\textit{Ergodic billiards that are not quantum unique ergodic}. With an appendix by the author and Luc Hillairet. Ann. of Math. (2) 171 (2010), no. 1, 605--619. 

\bibitem[HR1]{HR1} H. Hezari and G. Rivi\`ere,
\textit{$L^p$ norms, nodal sets, and quantum ergodicity}. \href{http://arxiv.org/abs/1411.4078}{arXiv:1411.4078}.

\bibitem[HR2]{HR2} H. Hezari and G. Rivi\`ere,
\textit{Quantitative equidistribution properties of toral eigenfunctions}. \href{http://arxiv.org/abs/1503.02794}{arXiv:1503.02794}. To appear in the Journal of Spectral Theory.

\bibitem[HS]{HS} R. Holowinsky and K. Soundararajan,
\textit{Mass equidistribution for Hecke eigenforms}. Ann. of Math. (2) 172 (2010), no. 2, 1517--1528.

\bibitem[Ho]{Ho} L. H\"ormander,
\textit{The spectral function of an elliptic operator}. Acta Math. 121 (1968), 193--218.

\bibitem[Le]{Le} M. Ledoux, 
\textit{The concentration of measure phenomenon}, American Mathematical Society, Providence, RI, 2001.

\bibitem[LR]{LR} S. Lester and Z. Rudnick,
\textit{Small scale equidistribution of eigenfunctions on the torus}. \href{http://arxiv.org/abs/1508.01074}{arXiv:1508.01074}.

\bibitem[LS]{LS} W. Luo and P. Sarnak,
\textit{Quantum ergodicity of eigenfunctions on $\text{PSL}_2(\mathbb{Z})\backslash\mathbb{H}^2$}. Inst. Hautes \'Etudes Sci. Publ. Math. No. 81 (1995), 207--237.

\bibitem[Lin]{Lin} E. Lindenstrauss, 
\textit{Invariant measures and arithmetic quantum unique ergodicity}, Ann. of Math. (2) 163 (2006), no. 1, 165--219.

\bibitem[Liv]{Liv} C. Liverani,
\textit{On contact Anosov flows}. Ann. of Math. (2) 159 (2004), no. 3, 1275--1312. 

\bibitem[RS]{RS} Z. Rudnick and P. Sarnak, 
\textit{The behaviour of eigenstates of arithmetic hyperbolic manifolds}. Comm. Math. Phys. 161 (1994), no. 1, 195--213.

\bibitem[SZ]{SZ} B. Shiffman and S. Zelditch, 
\textit{Random polynomials of high degree and Levy concentration of measure}. Asian J. Math. 7 (2003), no. 4, 627--646.

\bibitem[So1]{So1} C. Sogge,
\textit{Hangzhou lectures on eigenfunctions of the Laplacian}. Princeton University Press, Princeton, NJ, 2014.

\bibitem[So2]{So2} C. Sogge,
\textit{Localized $L^p$-estimates of eigenfunctions: A note on an article of Hezari and Rivi\`ere}. \href{http://arxiv.org/abs/1503.07238}{arXiv:1503.07238}.

\bibitem[So3]{So3} C. Sogge,
\textit{Problems related to the concentration of eigenfunctions}. \href{http://arxiv.org/abs/1510.07723}{arXiv:1510.07723}.

\bibitem[Sn]{Sn} A. \v Snirel'man,
\textit{The asymptotic multiplicity of the spectrum of the Laplace operator}. (Russian) Uspehi Mat. Nauk 30 (1975), no. 4 (184), 265--266.

\bibitem[SV]{SV} L. Silberman and A. Venkatesh, 
\textit{On quantum unique ergodicity for locally symmetric spaces}, Geom. Funct. Anal. 17 (2007), 960--998.

\bibitem[V]{V} J. VanderKam, 
\textit{$L^\infty$ norms and quantum ergodicity on the sphere}. Internat. Math. Res. Notices 1997, no. 7, 329--347.

\bibitem[Y]{Y} M. Young,
\textit{The quantum unique ergodicity conjecture for thin sets}. \href{http://arxiv.org/abs/1306.1554}{arXiv:1306.1554}.

\bibitem[Ze1]{Ze1} S. Zelditch,
\textit{Uniform distribution of eigenfunctions on compact hyperbolic surfaces}. Duke Math. J. 55 (1987), no. 4, 919--941.

\bibitem[Ze2]{Ze2} S. Zelditch,
\textit{Quantum ergodicity on the sphere}. Comm. Math. Phys. 146 (1992), no. 1, 61--71. 

\bibitem[Ze3]{Ze3} S. Zelditch,
\textit{Logarithmic lower bound on the number of nodal domains}. \href{http://arxiv.org/abs/1510.05315}{arXiv:1510.05315}.
\end{thebibliography}
\end{document}